\numberwithin{equation}{section}
\newcommand{\CC}{\mathbb{C}}
\newcommand{\EE}{\mathbb{E}}
\newcommand{\LL}{\mathbb{L}}
\newcommand{\PP}{\mathbb{P}}
\newcommand{\QQ}{\mathbb{Q}}
\newcommand{\RR}{\mathbb{R}}
\newcommand{\ZZ}{\mathbb{Z}}
\newcommand{\cal}{\mathcal}
\def\cC{{\cal C}}
\def\cD{{\cal D}}
\def\cM{{\cal M}}
\def\cO{{\cal O}}
\def\cU{{\cal U}}
\def\fB{\mathfrak{B}}
\def\fC{\mathfrak{C}}
\def\fM{\mathfrak{M}}
\def\fS{\mathfrak{S}}
\def\loc{\mathrm{loc}}
\def\Bl{\mathrm{Bl}}
\def\and{\quad{\rm and}\quad}
\def\lra{\longrightarrow }
\def\hra{\hookrightarrow}
\DeclareMathOperator{\image}{Im} 
\DeclareMathOperator{\id}{id}
\DeclareMathOperator{\rank}{rank}
\newtheorem{prop}{Proposition}[section]
\newtheorem{theo}[prop]{Theorem}
\newtheorem{lemm}[prop]{Lemma}
\newtheorem{coro}[prop]{Corollary}
\newtheorem{defi}[prop]{Definition}
\def\beq{\begin{equation}}
\def\eeq{\end{equation}}
\def\dual{^{\vee}}
\def\virt{^{\mathrm{vir}} }
\def\virtloc{\virt_\loc}
\def\Spec{\mathrm{Spec} }
\def\red{{\mathrm{red}}}
\def\surra{\twoheadrightarrow}
\def\hra{\hookrightarrow}
\def\bar{\overline}
\def\rou{\partial}
\def\wtil{\widetilde}
\def\what{\widehat}
\def\Pic{\mathrm{Pic}}
\def\deg{\mathrm{deg}}
\def\dim{\mathrm{dim}}
\def\Spec{\mathrm{Spec}}
\def\c>{\succ}
\def\c<{\prec}
\def\l({\left(}
\def\r){\right)}
\newcommand{\Gysin}[1]{0_{#1}^!}
\newcommand{\bdst}[1]{h^1/h^0(#1)}
\title[Algebraic reduced genus one GW invariants, Part 2]{Algebraic reduced genus one Gromov-Witten invariants for complete intersections in projective spaces, Part 2}
\author{Sanghyeon Lee}
\address{KIAS, Seoul, Korea}
\email{sanghyeon@kias.re.kr}
\author{Jeongseok Oh}
\address{KIAS, Seoul, Korea}
\email{jeongseok@kias.re.kr}
\thanks{}
\date{}
\begin{document}

\begin{abstract}
In \cite{LO18}, we provided an algebraic proof of the Zinger's comparison formula \cite{Zin09red, Zin08standard} between genus one Gromov-Witten invariants and reduced invariants when the target space is a complete intersection of dimension $2$ or $3$ in a projective space. In this paper, we extend the proof in \cite{LO18} in any dimensions and for descendant invariants.  
\end{abstract}

\maketitle
\setcounter{tocdepth}{1}
\tableofcontents

\section{Introduction}

\subsection{Main result}

Let $Q$ be a smooth projective variety over $\CC$.
For each $g,k \in \ZZ_{\geq 0}$ and $d \in H_2(Q;\ZZ)$, the moduli space of stable maps $\bar{M}_{g,k}(Q,d)$ 
carries the canonical virtual fundamental class $[\bar{M}_{g,k}(Q,d)]\virt$ of the virtual dimension $vdim:= c_1 (T_Q) \cap d  + (1-g)(\dim Q-3)+k$. 
In this article, we discuss a decomposition of $[\bar{M}_{g,k}(Q,d)]\virt$ for $g=1$ when $Q$ is embedded in $\PP^n$ as a complete intersection. 
It leads to an algebraic proof of Zinger's theorem \cite[Theorem 1A]{Zin08standard} for complete intersections in projective spaces.

Here are some notations.
\begin{itemize}
\item Let $\bar{M}^{red}_{1,k}(\PP^n,d)$ be the closure of the moduli space of stable maps with smooth domain curves $M_{1,k}(\PP^n,d) \subset \bar{M}_{1,k}(\PP^n,d)$.
\item Let $\bar{M}^{red}_{1,k}(Q,d)$ be the closed substack defined by
$$\bar{M}^{red}_{1,k}(Q,d) := \bar{M}_{1,k}(Q,d) \cap \bar{M}^{red}_{1,k}(\PP^n,d) \subset \bar{M}_{1,k}(Q,d).$$
\end{itemize}
We introduce other closed substacks in $\bar{M}_{1,k}(Q,d)$ indexed by numerical information on their rational tails.
First, we let $\fS = \fS_{k,d}$ be {\em the index set} consisting of elements
\begin{equation}\label{Sset}
\mu = ( \ (d_1(\mu),K_1(\mu)), ..., (d_{\ell(\mu)}(\mu),K_{\ell(\mu)}(\mu)) \ ),
\end{equation}
where $K_i(\mu)$ are mutually disjoint subsets of $[k]:=\{1, ..., k\}$ and $d_i(\mu)$ are positive integers with $\sum d_i(\mu) = d$.
For each $\mu \in \fS$, let $K_0(\mu)$ denote $[k] \backslash (\cup_i K_i(\mu))$.
We will abbreviate $K_0(\mu)$, $K_i(\mu)$ $d_i(\mu)$ and $\ell(\mu)$ to $K_0$, $K_i$, $d_i$ and $\ell$ when the context is clear.
For each $\mu$ of the form \eqref{Sset}, 
we assign the set $$\bar{\mu} := \{ \ (d_1(\mu),K_1(\mu)), ..., (d_{\ell(\mu)}(\mu),K_{\ell(\mu)}(\mu)) \ \},$$ 
and we define $\bar{\fS} := \{ \bar{\mu} \ | \ \mu \in \fS \}$.
\begin{itemize}
\item Let $\bar{M}_{\bar{\mu}}(\PP^n,d)$ be the closed substack of $\bar{M}_{1,k}(\PP^n,d)$ parametrizing $\bar{\mu}$-type maps; see \cite{VZ08} for the precise definition. 
\item Let $\bar{M}_{\bar{\mu}}(Q,d)$ be the closed substack defined by
$$\bar{M}_{\bar{\mu}}(Q,d) := \bar{M}_{1,k}(Q,d) \cap \bar{M}_{\bar{\mu}}(\PP^n,d).$$
\end{itemize}
We have a finite, proper node-identifying morphism \cite{VZ08}
\begin{align*} 
\iota_{\mu,Q} : \bar{\cM}_{1,K_0 \sqcup [\ell]} \times \bar{M}_{0,\bullet\sqcup K_1}(Q,d_1) \times_Q \cdots \times_Q \bar{M}_{0,\bullet\sqcup K_{\ell}}(Q,d_{\ell}) \to \bar{M}_{\bar{\mu}}(Q,d),
\end{align*}  
where $\bar{\cM}_{1,K_0\sqcup [\ell]}$ is the moduli space of genus one stable curves, and the fiber product is taken by evaluation maps of $\bullet$. Note that if $\bar{\mu}_1 = \bar{\mu}_2$, then the images of $\iota_{\mu_1,Q}$ and $\iota_{\mu_2,Q}$ are same in $\bar{M}_{1,k}(Q,d)$.
Let
$$\bar{M}_{0,\mu}(Q,d) :=  \bar{M}_{0,\bullet\sqcup K_1}(Q,d_1) \times_Q \cdots \times_Q \bar{M}_{0,\bullet\sqcup K_{\ell}}(Q,d_{\ell}) .$$ 
Note that $\bar{M}_{0,\mu}(Q,d)$ 
has the canonical virtual fundamental class with the virtual dimension $c_1(T_Q) \cap d -2\ell +\dim Q + \sum_{i=1}^{\ell} |K_i|$.

\medskip

\begin{theo} \label{main}
Suppose that $Q \subset \PP^n$ is a complete intersection in a projective space and $d \in H_2(Q;\ZZ) \to H_2(\PP^n;\ZZ) \cong \ZZ$ is a positive integer. 
Then we have a decomposition 
\begin{equation} \label{UltimateDecomp}
[\bar{M}_{1,k}(Q,d)]\virt = A^{red}_{k,d} + \sum_{\bar{\mu} \in \bar{\fS}} A^{\bar{\mu}}_{k,d}
\end{equation}
of cycles $A^{red}_{k,d}$ and $A^{\bar{\mu}}_{k,d}$ in the Chow group $A_{vdim}(\bar{M}_{1,k}(Q,d))$ 
such that
\begin{enumerate}
\item $A^{red}_{k,d}$ is supported on $\bar{M}^{red}_{1,k}(Q,d)$, and moreover, it can be written as a refined Euler class  
on the Vakil-Zinger's desingularization of $\bar{M}^{red}_{1,k}(\PP^n,d)$ \cite{VZ08}, 
\item 
$A^{\bar{\mu}}_{k,d}$ is supported on $\bar{M}_{\bar{\mu}}(Q,d)$, and moreover, its lifting along the finite morphism $\iota_{\mu,Q}$ can be expressed in terms of tautological classes, the Chern class of the tangent bundle $T_Q$ on $\bar{\cM}_{1,K_0\sqcup [\ell]} \times \bar{M}_{0,\mu}(Q,d)$. 
In particular, the integrations on $A^{\bar{\mu}}_{k,d}$ are genus zero invariants appeared in \cite[Theorem 1A]{Zin08standard}.
\end{enumerate}
\end{theo}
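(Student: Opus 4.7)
The plan is to follow the strategy of \cite{LO18} and extend it from the low-dimensional case to arbitrary complete intersections and descendants. Write $Q \subset \PP^n$ as the zero locus of a regular section of $\sE = \bigoplus_i \sO_{\PP^n}(a_i)$. Let $\pi : \sC \to \bar{M}_{1,k}(\PP^n,d)$ be the universal curve and $f : \sC \to \PP^n$ the universal stable map; then the canonical section of $\pi_\ast f^\ast \sE$ cuts out $\bar{M}_{1,k}(Q,d)$ inside $\bar{M}_{1,k}(\PP^n,d)$, and by standard functoriality for complete intersection targets the virtual class $[\bar{M}_{1,k}(Q,d)]\virt$ is the refined Euler class of this (two-term) obstruction bundle, applied to the ambient virtual class. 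So the decomposition \eqref{UltimateDecomp} will be produced by decomposing this refined Euler class.

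The second step is to pull back to the Vakil--Zinger desingularization $\wtil M_{1,k}(\PP^n,d) \to \bar{M}_{1,k}(\PP^n,d)$, on which the closure of the main component is smooth and the boundary strata corresponding to the index set $\bar{\fS}$ are transverse divisors/substacks coming from blow-ups of loci of rational tails. On this desingularization $\pi_\ast f^\ast \sE$ admits a canonical filtration whose graded pieces separate the contribution of the genus one ``core'' from the contributions of each rational tail component, essentially because over the locus where the genus one subcurve collapses, the pushforward of a line bundle of positive degree acquires a trivial direct summand corresponding to evaluation at the contracted genus one component. Splitting the refined Euler class along this filtration, and then pushing forward, produces a term supported on $\bar{M}^{red}_{1,k}(Q,d)$ together with one term per boundary stratum $\bar{\mu}$.

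The third step is to identify each boundary contribution with the claimed tautological expression on $\bar{\cM}_{1,K_0\sqcup[\ell]} \times \bar{M}_{0,\mu}(Q,d)$. For this I would pull back each summand along the node-identifying morphism $\iota_{\mu,Q}$; on the source the universal curve is a union of a genus one piece and $\ell$ rational pieces, so the refined Euler class of the relevant quotient of $\pi_\ast f^\ast \sE$ splits compatibly with this decomposition. The genus one factor contributes the Euler class of a bundle pulled back from $\bar{\cM}_{1,K_0\sqcup[\ell]}$ built out of $T_Q|_{\mathrm{ev}_\bullet}$ and cotangent line classes, and the genus zero factors contribute the standard virtual class on $\bar{M}_{0,\mu}(Q,d)$. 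This matches the form of the genus zero integrals appearing in \cite[Theorem 1A]{Zin08standard} and proves part (2); part (1) follows because the remaining term is by construction a refined Euler class on the Vakil--Zinger desingularization of the main component.

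The main obstacle, compared to the dimension $2$ and $3$ case of \cite{LO18}, will be Step 2: in higher dimension and with several line bundle summands in $\sE$, the obstruction bundle $\pi_\ast f^\ast \sE$ is no longer of low rank and the relevant filtration on $\wtil M_{1,k}(\PP^n,d)$ becomes considerably longer. I expect to have to iterate the blow-up/splitting analysis once per summand $\sO(a_i)$ and per stratum in $\bar{\fS}$, and to keep track of the interaction between the tautological ``twisted'' line bundles on the desingularization and the cotangent line classes producing descendants. Once the filtration and its compatibility with $\iota_{\mu,Q}^\ast$ are set up, the decomposition \eqref{UltimateDecomp} and its tautological description in (2) should follow from a bookkeeping of refined Gysin pullbacks.
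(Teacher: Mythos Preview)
Your proposal has a genuine gap in the mechanism producing the decomposition. In Step~2 you claim that a filtration of $\pi_*f^*\sE$ on the desingularization, together with a splitting of a refined Euler class along that filtration, yields the \emph{sum} $A^{red}_{k,d}+\sum_{\bar\mu}A^{\bar\mu}_{k,d}$. But splitting an Euler class along a filtration of a bundle produces a \emph{product} of Chern classes, not a sum of cycles supported on different loci. Moreover $\pi_*f^*\sE$ is only locally free on the main component $\wtil\cM^{red}$ after desingularization; on the boundary components $\wtil\cM^{\bar\mu}$ you do not have a global bundle to filter. The decomposition \eqref{UltimateDecomp} is not obtained this way.

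What the paper actually does is quite different. It replaces $\bar M_{1,k}(Q,d)$ by the Chang--Li moduli of maps with $p$-fields $\bar M_{1,k}(\PP^n,d)^p$ and uses the comparison $[\bar M_{1,k}(\PP^n,d)^p]\virtloc=(-1)^{d\sum a_i}[\bar M_{1,k}(Q,d)]\virt$ via cosection localization. After pulling back to the Hu--Li blow-up $\wtil\fM^w$ of the Artin stack of weighted curves, the space $\wtil\cM^p$ has explicit local equations $t\cdot y_j=0$, from which one computes the \emph{intrinsic normal cone} $\fC_{\wtil\cM^p/\wtil\fB}$ directly and finds that it decomposes into irreducible components, one over $\wtil\cM^{p,red}$ and one over each $\wtil\cM^{p,\bar\mu}$. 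Applying the localized Gysin map $0^!_{\,\cdot\,,\sigma}$ to each cone component \emph{separately} is what gives the sum. The computation of $A^{\bar\mu}_{k,d}$ in tautological terms then requires identifying the coarse moduli of the cone over $\wtil\cM^{p,\bar\mu}$ as the closure of the image of an explicit bundle map $\LL_{rat}\to H^1(E^\vee)$, blowing up to make that map injective, and tracking how the normal bundle $N_{\iota_\mu}$ is twisted through the successive blow-ups (Section~\ref{Sect:Normal}). None of this structure---cosection localization, local cone computation, or the normal-bundle bookkeeping on the Artin stack side---appears in your outline, and your Step~2 cannot be repaired into it without essentially starting over from the $p$-fields formulation.
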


Throughout the paper, we consider Chow groups $A_*(-)$ with $\QQ$-coefficients.
In short, Theorem \ref{main} tells us that algebraic reduced invariants defined in \cite{CM18, CL15} coincide with reduced invariants defined in symplectic geometry \cite{Zin09red} when the target space is complete intersections in projective spaces.

\smallskip

The first property (1) of Theorem \ref{main} is known to be the {\em quantum Lefschetz property}.
The classes $A^{red}_{k,d}$ and $A^{\bar{\mu}}_{k,d}$ are already introduced with different notations in other papers \cite{CL15, CM18}.
We accordingly give the precise definitions of them in Definition \ref{Def:CycleDecomp}.
The quantum Lefschetz property for $A^{red}_{k,d}$ is already explained in authors' previous work \cite[(4.1)]{LO18}, which follows from the idea in \cite{LZ09, VZ08} (the order follows the timeline on Arxiv).
We will review it in Section \ref{Review:reducedcycle}.
The refined Euler class description of $A^{red}_{k,d}$ is equivalent to the definition in \cite{CM18}.

\smallskip

The second property (2) is precisely stated in \cite[Equation (3-29)]{Zin08standard}, which is followed by several computations in \cite[Section 3.4]{Zin08standard}.
The main effort of this article is to prove that $A^{\bar{\mu}}_{k,d}$ is written in the form of \cite[Equation (3-29)]{Zin08standard} algebraically.
On the other hand, the authors' previous work already studied certain invariants defined by integrations on $A^{\bar{\mu}}_{k,d}$ when $\dim Q = 2$ or $3$.
This investigation was easier because most integrations became zero only by dimension counts and we did not considered descendant invariants but only GW invariants.
One advantage of Theorem \ref{main} is that one can study descendant invariants. We also believe that the idea of the algebraic proof has a possibility to be applied for further works related to the quantum Lefschetz property for higher genus case \cite{LLO}.

\subsection{Desingularization and Local equations} \label{desingAndDecomp}

In this section, we review the desingularizations of moduli spaces of genus one stable maps studied by Vakil-Zinger \cite{VZ08} and Hu-Li \cite{HL10} as well as local equations of them studied by Hu-Li \cite{HL10}.

Let $\fM_{g,k}^w$ denote the moduli space of genus $g$ prestable curves with $k$-marked points, and non-negative integer weights on each component of curves.
Each object in $\fM_{g,k}^w$ is called as a weighted curve.
The moduli space $\fM_{g,k}^w$ is decomposed into $\coprod_d \fM_{g,k,d}^w$, where $d$ indicates the sum of weights on every component. 
Let $\fB_{g,k,d}$ be the stack of genus $g$ prestable curves $C$ with $k$-marked points, and line bundles $L$ of degree $d$ on $C$.
Let $\fM^{div}_{g,k,d}$ denote the stack of pairs $(C,D)$, where $C$ is a genus $g$ prestable curve with $k$-marked points, and $D$ is a degree $d$ effective divisor on $C$. 
We abbreviate subscripts on $\fM_{g,k,d}^w$ (respectively $\fB_{g,k,d}$ and $\fM^{div}_{g,k,d}$) when the context is clear.
Note that $\fM^w$, $\fB$, and $\fM^{div}$ are smooth Artin stacks.

We now assume that $g=1$.
Hu-Li constructed a (finite) sequence of blow-up $\wtil{\fM}^w \to \fM^w$ to get another smooth Artin stack; see \cite{HL10} for details.
The important property of this successive blow-up is as follows. 
Let $\wtil{\cM}:= \wtil{\fM}^w \times_{ \fM^w} \bar{M}_{1,k}(\PP^n,d)$, and $\wtil{\cM}^{\bar{\mu}} := \wtil{\fM}^{\bar{\mu}} \times_{ \fM^{\bar{\mu}}} \bar{M}_{1,k}(\PP^n,d)$ where $\fM^{\bar{\mu}}$ is a closed substack of $\fM^w$ of $\bar{\mu}$-type weighted curves and $\wtil{\fM}^{\bar{\mu}}$ is the exceptional divisor of the blow-up along its proper transform in $\wtil{\fM}^w$.
Then we have a decomposition of the space
\begin{align} \label{Decomp}
\wtil{\cM} = \wtil{\cM}^{red} \cup \bigcup_{\bar{\mu} \in \bar{\fS} }  \wtil{\cM}^{\bar{\mu}}
\end{align}
such that 
\begin{enumerate}
\item the image of $\wtil{\cM}^{red}$ under the projection morphism 
$$b: \wtil{\cM} \to  \bar{M}_{1,k}(\PP^n,d)$$ is supported on $\bar{M}^{red}_{1,k}(\PP^n,d),$
\item the image of $\wtil{\cM}^{\bar{\mu}}$ under $b$ is supported on $\bar{M}_{\bar{\mu}}(\PP^n,d)$,
\item $\wtil{\cM}^{red}$ and $\wtil{\cM}^{\bar{\mu}}$, are smooth, and they intersect transversally each other.
\end{enumerate}

Furthermore, Hu-Li described $\wtil{\cM}$ locally as a zero of a multi-valued function.
For each hyperplane $H \subset \PP^n$, we can choose an open substack $\cU_H \subset \bar{M}_{1,k}(\PP, d)$ consisting of $(C,f)$ such that $f^*(H) \subset C$ is a smooth, $d$ distinct points \cite[Section 3]{FP97}.
Note that $\cU_H$ covers $\bar{M}_{1,k}(\PP, d)$ as varying $H \subset \PP^n$.
Let $\wtil{\cU}_H := \cU_H \times_{\bar{M}_{1,k}(\PP, d)} \wtil{\cM}$.
Consider a following diagram
\begin{align} \label{Diagram:Charts}
\xymatrix{
U_H \ar@{}[rd]|{\Box} \ar[r] \ar[d] & M^{div} \ar@{}[rd]|{\Box} \ar[r] \ar[d] & B \ar@{}[rd]|{\circlearrowleft} \ar[r] \ar[d] & M^w \ar[d] \\
\wtil{\cU}_H \ar[r] & \wtil{\fM}^{div} \ar[r] & \wtil{\fB} \ar[r] & \wtil{\fM}^w,
}
\end{align}
where $\wtil{\fB} := \fB \times_{\fM^w} \wtil{\fM}^w$, and $\wtil{\fM}^{div} := \fM^{div} \times_{\fM^w} \wtil{\fM}^w$ are fiber products; $M^w \to \wtil{\fM}^w$ is a smooth affine chart; and $B = \Pic(\cC_{M^w}/M^w,d)$ is a relative Picard scheme with the weight $d$. 
Note that the fiber product of the most right-hand square of \eqref{Diagram:Charts} is a $\CC^*$-gerbe of $B$.
$M^{div}$ is a scheme since the morphism $\wtil{\fM}^{div} \to \wtil{\fB}$ is representable. 
The scheme structure on $U_H$ can be described as follows. 
Let $t = t_1 t_2 \dots t_r \in \Gamma(M^{w}, \cO_{M^{w}})$ be a product of node smoothing variables. 
Consider a multi-valued function
\begin{align*}
F : M^{div} \times \CC^n \times \CC^{dn} & \to \CC^n\\
(x,y_1,\dots,y_n,\dots) & \mapsto (t(x)y_1,\dots, t(x)y_n).
\end{align*}
There exists an open dense subset $V_H \subset M^{div} \times \CC^n \times \CC^{dn}$ such that $U_H$ is a closed substack of $V_H$ defined by $\{F|_{V_H}=0\} $. 
Locally, we have $\wtil{\cM}^{red} = \{y_1=\dots = y_n=0\}$, and $\cup_{\bar{\mu}} \wtil{\cM}^{\bar{\mu}} = \{ t=0 \} $.
For each node-smoothing parameter $t_i$, $\{t_i=0 \}$ is locally isomorphic to $\wtil{\cM}^{\bar{\mu}}$ for some $\bar{\mu} \in \bar{\fS}$.
Note that $\{t_i = 0\}$ is supported on the image of the glueing morphism
\begin{equation}\label{glueingnode}
\fM_{1,K_0 \cup [\ell]} \times \fM^w_{0,\bullet\cup K_1,d_1} \times \cdots \times \fM^w_{0,\bullet\cup K_{\ell},d_{\ell}} \to \fM^w,
\end{equation}
which is unramified and has codimension $\ell$ in $\fM^w$. Note that $\fM^{\bar{\mu}}$ is its image.

\subsection{Moduli space of stable maps to $Q$ vs Moduli space of stable maps to $\PP^n$ with fields} \label{Condition1}

Suppose that 
$$Q = \{ f_1 = \cdots = f_m = 0\} \subset \PP^n$$ 
is a complete intersection in $\PP^n$ defined by homogeneous polynomials $f_i \in H^0(\PP^n, \cO(\deg f_i))$.
Let $\pi : \cC \to \bar{M}_{1,k}(Q,d)$ be the universal curve and $ev : \cC \to Q$ be the evaluation morphism. 
The dual of the natural relative perfect obstruction theory of $\bar{M}_{1,k}(Q,d)$ over $\fM^w$ is $\RR\pi_*ev^*T_Q$. 
It gives rise to a class 
$$[\bar{M}_{1,k}(Q,d)]\virt \in A_{vdim}( \bar{M}_{1,k}(Q,d) ).$$ 
By abuse of notations, we use the same notations for universal curves and evaluation maps on any spaces when the context is clear.

Now we turn our interest to moduli space with fields.
Any morphism from a curve $C$ to $\PP^n$ can be written in terms of a line bundle $L$ on $C$ and a section $u$ in $H^0(C, L^{\oplus n+1})$.
Hence there is a forgetful morphism $\bar{M}_{1,k}(\PP^n, d) \to \fB$.
The moduli spaces of stable maps with fields $\bar{M}_{1,k}(\PP^n,d)^{p}$, constructed by Chang-Li \cite{CL11fields}, is by definition a space parametrizing $(C, L, u)  \in \bar{M}_{1,k}(\PP^n, d)$ and $p=(p_1,\dots,p_m) \in H^0(C, (\oplus_i L^{\otimes -\deg f_i}) \otimes \omega_C)$, 
where $\omega_C$ is a dualizing sheaf of $C$. 
By using the cosection-localization technique \cite{KL13cosec} studied by Kiem-Li, one can define a class $[\bar{M}_{1,k}(\PP^n,d)^{p}]\virtloc$ in $A_{vdim}(\bar{M}_{1,k}(Q,d))$.
Moreover, by \cite{CL11fields, KO18localized, CL18inv}, it is proven that
\begin{align}\label{stablevspfield}
[\bar{M}_{1,k}(\PP^n,d)^{p}]\virtloc & = (-1)^{d\sum_i \deg f_i }  [\bar{M}_{1,k}(Q,d)]\virt. 
\end{align}

Let $\wtil{\cM}^p:= \wtil{\fM}^w \times_{ \fM^w} \bar{M}_{1,k}(\PP^n,d)^p$ be the fiber product space.
Let $\wtil{\cM}_{Q}:= \wtil{\fM}^w \times_{ \fM^w} \bar{M}_{1,k}(Q,d)$ be a closed substack of $\wtil{\cM}$. 
Let $b_Q: \wtil{\cM}_{Q} \to \bar{M}_{1,k}(Q,d)$ be the projection morphism.
A pull-back of the relative perfect obstruction theory of $\bar{M}_{1,k}(\PP^n,d)^p$ over $\fB$
$$
 \left(\RR \pi_{*}ev^*\cO_{\PP^n}(1)^{\oplus (n+1)} \bigoplus \oplus_i \RR \pi_{*}(ev^*\cO_{\PP^n}(-\deg f_i) \otimes \omega_{\pi} ) \right)^{\vee}
$$
is a relative perfect obstruction theory of $\wtil{\cM}^p$ over $\wtil{\fB}$, denoted by $E_{\wtil{\cM}^p/\wtil{\fB}}$.
The localized virtual class $[\wtil{\cM}^p]\virtloc  \in A_{vdim}(\wtil{\cM}_{Q})$ defined by a pull-back of the cosection satisfies an equivalence of classes \cite[Lemma 3.2]{LO18}
\begin{align}\label{VirPushFor}
(b_Q)_*[\wtil{\cM}^p]\virtloc = [\bar{M}_{1,k}(\PP^n,d)^{p}]\virtloc. 
\end{align}
By \eqref{stablevspfield} and \eqref{VirPushFor}, we will use the left-hand side of \eqref{VirPushFor} for the proof of Theorem \ref{main}.

We can decompose $[\wtil{\cM}^p]\virtloc$ using the following decomposition of the space $\wtil{\cM}^p$. 
$\wtil{\cM}^p$ is locally defined by a zero of a multi-valued function.
More precisely, the open chart $U_H^p := U_H \times_{\wtil{\cU}_H} \wtil{\cU}_H^p$ is described by $\{F' |_{V_H^p} = 0 \}$, where $U_H$, $\wtil{\cU}_H$ are introduced in \eqref{Diagram:Charts};
$\wtil{\cU}_H^p := \wtil{\cU}_H \times_{ \wtil{\cM}}  \wtil{\cM}^p$; $V_H^p$ is a suitable open subset of $M^{div} \times \CC^n \times \CC^m \times \CC^{dn}$; and $F'$ is a multi-valued function 
\begin{align*}
 F' : M^{div} \times \CC^n \times \CC^m \times \CC^{dn} & \to \CC^n \times \CC^m \\
 (x,y_1,...,y_n,y_{n+1},...,y_{n+m},...) & \mapsto (t(x)y_1, t(x)y_2, ... , t(x)y_{n+m}).
\end{align*}

A local decomposition
$$
\{ y_1 = y_2 = \cdots =y_{n+m}= 0\} \cup \bigcup_i \{t_i=0\}
$$
gives rise to a decomposition 
\begin{align} \label{Decomp:Qp2}
\wtil{\cM}^p = \wtil{\cM}^{p, red} \cup \bigcup_{\bar{\mu} \in \bar{\fS} }  \wtil{\cM}^{p, \bar{\mu}}.
\end{align}
Note that $\wtil{\cM}^{p, red} \cong \wtil{\cM}^{red}$, which means $\wtil{\cM}^{p, red}$ is not defined by a fiber product. We will see that the virtual cycle $[\wtil{\cM}^p]$ decompose into cycles supported on components of the decomposition \eqref{Decomp:Qp2}. The localized virtual cycle $(b_Q)_*[\wtil{\cM}^p]\virtloc$ decompose into cycles supported on $\bar{M}^{red}_{1,k}(Q,d)$ and $\bar{M}_{\bar{\mu}}(Q,d)$ accordingly.

\subsection{Plan of the paper}

In Section \ref{Decomp:pfield}, we discuss decompositions of the relative intrinsic normal cones supported on the decomposition \eqref{Decomp:Qp2}.
Using one of those decompositions, we define the cycles $A^{red}_{k,d}$ and $A^{\bar{\mu}}_{k,d}$. 
In Section \ref{CoarseSpace}, we express $A^{\bar{\mu}}_{k,d}$ in terms of coarse spaces of the intrinsic normal cones in order to use local descriptions of the cones with coordinates studied in Section \ref{Decomp:Coord}.
It leads us to get a description of $A^{\bar{\mu}}_{k,d}$ in terms of Chern classes of vector bundles, which will be discussed in Section \ref{Sect:Chern}.
Finally we will prove Theorem \ref{main} in Section \ref{ChernClass} and \ref{Review:reducedcycle}.

The crucial bridge between Section \ref{Sect:virtcomp} and \ref{Sect:Pf} is Section \ref{Sect:Normal}. 
Here, we discuss how the normal bundles of node-identifying morphisms \eqref{glueingnode} are modified along the successive blow-up. 
In Section \ref{Sect:Chern} and \ref{ChernClass}, these normal bundles will be compared to the cones studied in Section \ref{Sect:virtcomp} in order to obtain a description of $A^{\bar{\mu}}_{k,d}$.

\subsection*{Acknowledgments}
The authors would like to thank Navid Nabijou for valuable comments. We also thank to the Fields Institute for wonderful working environment.

S. L. is supported by a KIAS Individual Grant MG070901 at Korea Institute for Advanced Study.
J. O. is supported by a KIAS Individual Grant MG063002 at Korea Institute for Advanced Study.

\section{Computation of normal cones}\label{Sect:virtcomp}

In this section, we study relative intrinsic normal cones of $\wtil{\cM}$ and $\wtil{\cM}^p$, and their coarse moduli spaces; see \cite{Beh09, CL15} for the definition of coarse moduli spaces of cone stacks.
More precisely, we decompose the relative intrinsic normal cone $\fC_{\wtil{\cM}/\wtil{\fM}^w}$ (resp. $\fC_{\wtil{\cM}^p/\wtil{\fM}^w}$) into irreducible components which are supported on irreducible components of $\wtil{\cM}$ (resp. $\wtil{\cM}^p$); see irreducible decompositions \eqref{Decomp} and \eqref{Decomp:Qp2}.

Using the decomposition of $\fC_{\wtil{\cM}^p/\wtil{\fM}^w}$, we will define $A^{red}_{k,d}$ and $A^{\bar{\mu}}_{k,d}$.
Also we will see that $H^1( E_{\wtil{\cM}^p/\wtil{\fM}^w }^\vee|_{\wtil{\cM}^{p, \bar{\mu}}} )$ is locally free which contains the coarse moduli space of an irreducible component of $\fC_{\wtil{\cM}^p/\wtil{\fM}^w}$ lying on $\wtil{\cM}^{p,\bar{\mu}}$ so that we can reinterpret $A^{\bar{\mu}}_{k,d}$ in terms of $H^1( E_{\wtil{\cM}^p/\wtil{\fM}^w }^\vee|_{\wtil{\cM}^{p, \bar{\mu}}} )$ and coarse moduli space of the cone.
This interpretation will be helpful for the computation of $A^{\bar{\mu}}_{k,d}$ in Section \ref{Sect:Pf}.

\subsection{Decomposition of normal cones: with local coordinates} \label{Decomp:Coord}

We compute and decompose normal cones of the moduli spaces $\wtil{\cM}$ and $\wtil{\cM}^p$ locally by using local charts and local equations referred in Section \ref{desingAndDecomp} and \ref{Condition1}.

The following is a general situation. Let $X$ be an affine scheme $X = \Spec(R)$ of a commutative $\CC$-algebra $R$ and $t=\prod\limits_i^{r}t_i \in R$ be a product element in $R$. 
Assume that both $X$ and each $\Spec (R/(t_i) )$ are irreducible. 
Consider the subscheme  
$$Y := \{ty_1 = \dots = ty_k = 0\} \subset X \times \CC^k = \Spec(R[y_1,\dots,y_k]).$$
We may consider $X = (M^{div} \times \CC^{dn}) \cap V_H$ (resp. $X = (M^{div} \times \CC^{dn}) \cap V^p_H$), $Y = U_H$ (resp. $Y = U^p_H$) and $k=n$ (resp. $k=n+m$) for a local description of $\wtil{\cM}$ (resp. $\wtil{\cM}^p$); see Section \ref{desingAndDecomp} and \ref{Condition1}.

From a direct computation, we can check that
$$
C_{Y/X\times \CC^k} \cong \Spec \left( \frac{\what{R}[x_1,\dots,x_k]}{(y_ix_j-x_jy_i)_{1\leq i<j \leq k}} \right), \  \what{R} := R[y_1,\dots,y_k]/(ty_i)_{1\leq i \leq k} \ .
$$ 
Note that $Y = \Spec(\what{R})$. Let 
$$Y^{red} := \{y_1 = \dots = y_k=0\} \text{ and } Y^i:= \{t_i = 0\} \text{ in } X \times \CC^k.$$ 
Then $X \cong Y^{red}$ and $Y = Y^{red} \cup \bigcup\limits_i \ Y^i$. 
We have 
\begin{align*}
& C_{Y/X\times \CC^k}|_{Y^{red}}  \\
& \cong \Spec\left( \left( \frac{ \what{R}[x_1,\dots,x_k] }{( y_ix_j-x_jy_i)_{1\leq i<j \leq k} } \right)
\otimes_{\what{R}} \what{R}/(y_1,\dots,y_k) \right) \\
& \cong \Spec(R[x_1,\dots,x_k]), \\
\end{align*}
and
\begin{align}\label{conelocalexpression}
& C_{Y/X\times \CC^k }|_{Y^i} \\ \nonumber
& \cong \Spec\left( \left( \frac{ \what{R}[x_1,\dots,x_k] }{( y_ix_j-x_jy_i)_{1\leq i<j \leq k} } \right)
\otimes_{ \what{R} } R[y_1,\dots,y_k]/(t_i) \right) \\ \nonumber
& \cong \Spec\left(  \frac{ R/(t_i)[y_1\dots,y_k][x_1,\dots,x_k] }{(y_ix_j-x_jy_i)_{1\leq i<j \leq k} } \right).
\end{align}
Hence $C_{Y/X\times \CC^k}|_{Y^{red}}$ is a rank $k$ vector bundle on $Y^{red} \cong X =\Spec(R)$, which is isomorphic to the normal bundle $N_{Y^{red}/X\times \CC^k}$, and $C_{Y/X\times \CC^k}|_{Y^{i}}$ is a fiber bundle over $\Spec(R/(t_i))$ whose fibers are isomorphic to the affine cone of $\Bl_{0}\CC^k$ in $\CC^k \times \CC^k$.
Since $\Spec(R)$ and $\Spec(R/(t_i))$ are irreducible, so are $C_{Y/X\times \CC^k}|_{Y^{red}}$ and $C_{Y/X\times \CC^k}|_{Y^{i}}$.

\smallskip

Now, we consider the local model of $\wtil{\cM}$.
Let 
$$\wtil{\cU}_H^{red} := \wtil{\cU}_H \times_{\wtil{\cM}} \wtil{\cM}^{red} \text{ and } \wtil{\cU}_H^{\bar{\mu}} :=  \wtil{\cU}_H \times_{\wtil{\cM}} \wtil{\cM}^{\bar{\mu}} .$$
Let $\wtil{\fM}^{rat}:= \cup_{\bar{\mu}} \wtil{\fM}^{\bar{\mu}}$.
Then $N^\vee_{\wtil{\fM}^{rat}/\wtil{\fM}^w}|_{\wtil{\cU}_H^{\bar{\mu}}}$ is locally isomorphic to the conormal bundle $\wtil{(t)/(t)^2}$ on $Y^i$, where $\bar{\mu}$-component is locally defined by $\{t_i=0\}$. 
By \cite[Proposition 3.2]{CL15}, the obstruction bundle $H^1( E_{\wtil{\cU}_H / \wtil{\fM}^{div} }\dual |_{\wtil{\cU}_H^{\bar{\mu}}} )$ is locally isomorphic to $Y^i \times \CC^n$. Since $Y$ is defined by an ideal $(ty_1,\dots,ty_n)$, we observe that the natural morphism 
\begin{align}\label{obstructionmorphism1}
N_{\wtil{\fM}^{rat}/\wtil{\fM}^w}|_{\wtil{\cU}_H^{\bar{\mu}}} \to H^1( E_{\wtil{\cU}_H / \wtil{\fM}^{div} }\dual |_{\wtil{\cU}_H^{\bar{\mu}}} )
\end{align} 
is expressed by
$$
Y^i \times \CC \to Y^i \times \CC^n, \ 1 \mapsto (y_1,\dots,y_n).
$$
From \eqref{conelocalexpression}, we also observe that 
\begin{align} \label{Remark1}
- \ \text{the cone $C_{Y/X\times \CC^k }|_{Y^i}$ is the closure of the image of the above morphism.} \ -
\end{align}

Note that $N_{\wtil{\fM}^{\bar{\mu}_i}/\wtil{\fM}^w} \dual$ is locally defined by $\wtil{ (t_i)/(t_i)^2 }$. Hence, locally we have
\begin{align*}
N_{\wtil{\fM}^{rat}/\wtil{\fM}^w}|_{\wtil{\cU}_H^{\bar{\mu}}} \stackrel{loc}{\cong} N_{\wtil{\fM}^{\bar{\mu}}/ \wtil{\fM}^w }\left( \sum\limits_{\bar{\mu}' \neq \bar{\mu}} \wtil{\fM}^{\bar{\mu}'} \cap \wtil{\fM}^{\bar{\mu}} \right) \left| _{\wtil{\cU}_H^{\bar{\mu}}} . \right.
\end{align*}
By glueing the local isomorphisms, we have a global isomorphism
\begin{align} \label{Remark2}
N_{\wtil{\fM}^{rat}/\wtil{\fM}^w}|_{\wtil{\fM}^{\bar{\mu}}} \cong N_{\wtil{\fM}^{\bar{\mu}}/ \wtil{\fM}^w }\left( \sum\limits_{  \bar{\mu}' \neq \bar{\mu} } \wtil{\fM}^{\bar{\mu}'} \cap \wtil{\fM}^{\bar{\mu}} \right).
\end{align}

\subsection{Decomposition of intrinsic normal cones of moduli spaces} \label{Decomp:pfield}

In Section \ref{Decomp:Coord}, we obtain the irreducible decomposition of the intrinsic normal cone
\begin{equation}\label{localdecomp1}
\fC_{\wtil{\cU}_H/\wtil{\fM}^{div}} = \fC_{\wtil{\cU}_H^{red}/\wtil{\fM}^{div}} \cup \left( \cup_{\bar{\mu}} \fC_{\wtil{\cU}_H /\wtil{\fM}^{div}}|_{\wtil{\cU}_H^{\bar{\mu}}} \right).
\end{equation} 
Similarly, we obtain the irreducible decomposition
\begin{equation}\label{localdecomp1:p}
\fC_{\wtil{\cU}_H^p/\wtil{\fM}^{div}} = \fC_{\wtil{\cU}_H^{p,red}/\wtil{\fM}^{div}} \cup \left( \cup_{\bar{\mu}} \fC_{\wtil{\cU}_H^p /\wtil{\fM}^{div}}|_{\wtil{\cU}_H^{p,\bar{\mu}}} \right),
\end{equation} 
where $\wtil{\cU}_H^{p, red} := \wtil{\cU}^p_H \times_{\wtil{\cM}^p} \wtil{\cM}^{p, red}$ and $\wtil{\cU}_H^{p,\bar{\mu}} :=  \wtil{\cU}^p_H \times_{\wtil{\cM}^p} \wtil{\cM}^{p, \bar{\mu}} .$
However, we cannot glue these local decompositions to get global decompositions because there is no natural morphism $\wtil{\cM} \to \wtil{\fM}^{div}$. Instead, using local decompositions \eqref{localdecomp1} and \eqref{localdecomp1:p}, we will get global decompositions of relative intrinsic normal cones $\fC_{\wtil{\cM}/\wtil{\fM}^w}$, $\fC_{\wtil{\cM}^p/\wtil{\fM}^w}$, $\fC_{\wtil{\cM}/\wtil{\fB}}$ and $\fC_{\wtil{\cM}^p/\wtil{\fB}}$.

Consider the morphism $\theta_1 : \bdst{T_{\wtil{\cU}_H/\wtil{\fM}^{div} }} \to \bdst{T_{\wtil{\cU}_H/\wtil{\fM}^w }}$ induced by an exact triangle of the tangent complexes.
By the proof of \cite[Proposition 3]{KKP}, we obtain $\theta_1^*(\fC_{\wtil{\cU}_H/\wtil{\fM}^w }) = \fC_{\wtil{\cU}_H/\wtil{\fM}^{div} }$.
Therefore the decomposition \eqref{localdecomp1} descends to the irreducible decomposition 
\begin{equation}\label{localdecomp2}
\fC_{\wtil{\cU}_H/\wtil{\fM}^w} = \fC_{\wtil{\cU}_H^{red}/\wtil{\fM}^w} \cup \left( \cup_{\bar{\mu}} \fC_{\wtil{\cU}_H /\wtil{\fM}^w}|_{\wtil{\cU}_H^{\bar{\mu}}} \right).
\end{equation} 
Similarly, \eqref{localdecomp1:p} induces the following decomposition
\begin{align}\label{localdecomp2:p}
&\fC_{\wtil{\cU}_H^p/\wtil{\fM}^{w}} = \fC_{\wtil{\cU}_H^{p,red}/\wtil{\fM}^{w}} \cup \left( \cup_{\bar{\mu}} \fC_{\wtil{\cU}_H^p /\wtil{\fM}^{w}}|_{\wtil{\cU}_H^{p,\bar{\mu}}} \right).
\end{align}
They glue to get the global decompositions of the intrinsic normal cone 
\begin{align*}
\fC_{\wtil{\cM}/\wtil{\fM}^w} & = \fC_{\wtil{\cM}^{red}/\wtil{\fM}^w} \cup \left( \cup_{\bar{\mu}} \fC_{\wtil{\cM} /\wtil{\fM}^w}|_{\wtil{\cM}^{\bar{\mu}}} \right), \\ \nonumber
\fC_{\wtil{\cM}^p/\wtil{\fM}^w} & = \fC_{\wtil{\cM}^{p,red}/\wtil{\fM}^w} \cup \left( \cup_{\bar{\mu}} \fC_{\wtil{\cM}^p /\wtil{\fM}^w}|_{\wtil{\cM}^{p,\bar{\mu}}} \right).
\end{align*}

Similarly, for an induced morphism $\theta_2 : \bdst{T_{\wtil{\cU}_H/\wtil{\fB} }} \to \bdst{T_{\wtil{\cU}_H/\wtil{\fM}^w }}$, we have $\theta_2^*(\fC_{\wtil{\cU}_H / \wtil{\fM}^w} ) = \fC_{\wtil{\cU}_H / \wtil{\fB}}$. 
Hence the local decompositions \eqref{localdecomp2} and \eqref{localdecomp2:p} pull back to the following local decompositions
\begin{align*} 
\fC_{\wtil{\cU}_H/\wtil{\fB}} & = \fC_{\wtil{\cU}_H^{red}/\wtil{\fB}} \cup \left( \cup_{\bar{\mu}} \fC_{\wtil{\cU}_H /\wtil{\fB}}|_{\wtil{\cU}_H^{\bar{\mu}}} \right), \\
\fC_{\wtil{\cU}_H^p/\wtil{\fB}} & = \fC_{\wtil{\cU}_H^{p,red}/\wtil{\fB}} \cup \left( \cup_{\bar{\mu}} \fC_{\wtil{\cU}_H^p /\wtil{\fB}}|_{\wtil{\cU}_H^{p,\bar{\mu}}} \right).
\end{align*}
They glue to the global irreducible decompositions
\begin{align*} 
\fC_{\wtil{\cM}/\wtil{\fB}} & = \fC_{\wtil{\cM}^{red}/\wtil{\fB}} \cup \left( \cup_{\bar{\mu}} \fC_{\wtil{\cM}/\wtil{\fB}} |_{\wtil{\cM}^{\bar{\mu}}}  \right), \\ \nonumber
\fC_{\wtil{\cM}^p/\wtil{\fB}} & = \fC_{\wtil{\cM}^{p,red}/\wtil{\fB}} \cup \left( \cup_{\bar{\mu}} \fC_{\wtil{\cM}^p/\wtil{\fB} } |_{\wtil{\cM}^{p,\bar{\mu}}}  \right).
\end{align*}

Now we define the classes $A^{red}_{k,d}$ and $A_{k,d}^{\bar{\mu}}$.
\begin{defi} \label{Def:CycleDecomp}
The classes $A^{red}_{k,d}, A^{\bar{\mu}}_{k,d} \in A_{vdim}(\bar{M}_{1,k}(Q,d))$ are defined by
\begin{align*}
A^{red}_{k,d} & := (-1)^{d\sum_i \deg f_i }(b_Q)_*0^!_{\bdst{E_{\wtil{\cM}^p/\wtil{\fB}} \dual}, \loc }[\fC_{\wtil{\cM}^{p,red} /\wtil{\fB}} ], \\
A^{\bar{\mu}}_{k,d} & := (-1)^{d\sum_i \deg f_i } (b_Q)_*0^!_{\bdst{E_{\wtil{\cM}^p/\wtil{\fB}} \dual}, \loc }[\fC_{\wtil{\cM}^p /\wtil{\fB}}|_{\wtil{\cM}^{p,\bar{\mu}}}].
\end{align*}
They are localized classes by a cosection; see \cite{KL13cosec} for the detail of cosection-localized class. 
The cosection will be defined in \eqref{Cosection:Def}. 
\end{defi}

By \eqref{stablevspfield}, we have
\[
[\bar{M}_{1,k}(Q,d)]\virt = (-1)^{d\sum_i \deg f_i }(b_Q)_*[\wtil{\cM}^p]\virtloc = A^{red}_{k,d} + \sum_{\bar{\mu} \in \bar{\fS}} A^{\bar{\mu}}_{k,d}.
\]
This gives the cycle decomposition \eqref{UltimateDecomp} in Theorem \ref{main}. It remains to show conditions (1) and (2) in Theorem \ref{main}. We will do this in Section \ref{Sect:Pf}.

\subsection{Coarse moduli spaces of the cone stacks} \label{CoarseSpace}
In Section \ref{Sect:Chern} and \ref{ChernClass}, we will compute $A^{\bar{\mu}}_{k,d} $ in terms of Chern classes of vector bundles.
To do so, we express $A^{\bar{\mu}}_{k,d}$ in terms of the coarse moduli spaces by using \cite[Proposition 6.3]{CL15}, namely
\begin{align}\label{coarsemoduliformula}
\Gysin{\bdst{E_{\wtil{\cM}^p/\wtil{\fB}}\dual |_{\wtil{\cM}^{p, \bar{\mu}}}},\mathrm{loc}}[\fC_{\wtil{\cM}^p/\wtil{\fB}} |_{\wtil{\cM}^{p, \bar{\mu}}}] = \Gysin{H^1(E_{\wtil{\cM}^p/\wtil{\fB}}\dual |_{\wtil{\cM}^{p, \bar{\mu}}} ),\mathrm{loc}}[C_{\wtil{\cM}^p/\wtil{\fB}} |_{\wtil{\cM}^{p, \bar{\mu}}}]
\end{align}
where $C_{\wtil{\cM}^p/\wtil{\fB}} |_{\wtil{\cM}^{p, \bar{\mu}}}$ is the coarse moduli space of the cone stack $\fC_{\wtil{\cM}^p/\wtil{\fB}} |_{\wtil{\cM}^{p, \bar{\mu}}}$.
Note that $H^1(E_{\wtil{\cM}^p/\wtil{\fB}}\dual |_{\wtil{\cM}^{p, \bar{\mu}}} )$, which is the coarse moduli space of the bundle stack $\bdst{E_{\wtil{\cM}^p/\wtil{\fB}}\dual |_{\wtil{\cM}^{p, \bar{\mu}}}}$, is a vector bundle \cite{CL15, HL10}.
In Section \ref{Sect:Chern}, we will study another formula of the right-hand side of \eqref{coarsemoduliformula} using 
$$C_{\wtil{\cM}^p/\wtil{\fM}^w}|_{\wtil{\cM}^{p, \bar{\mu}}} \subset H^1( E_{\wtil{\cM}^p/\wtil{\fM}^w }^\vee|_{\wtil{\cM}^{p, \bar{\mu}}} )$$ 
instead of 
$$C_{\wtil{\cM}^p/\wtil{\fB}}|_{\wtil{\cM}^{p, \bar{\mu}}} \subset H^1( E_{\wtil{\cM}^p/\wtil{\fB} }^\vee|_{\wtil{\cM}^{p, \bar{\mu}}} )$$ 
to have an advantage for a Chern class expression of $A^{\bar{\mu}}_{k,d}$.
Before doing this, we need to prove $H^1( E_{\wtil{\cM}^p/\wtil{\fM}^w }^\vee|_{\wtil{\cM}^{p, \bar{\mu}}} )$ is locally free.

\begin{lemm} \label{Lem:Aeq}
The coherent sheaf $H^1( E_{\wtil{\cM}^p/\wtil{\fM}^w }^\vee|_{\wtil{\cM}^{p, \bar{\mu}}} )$ is locally free of rank $n + m + \sum_i d\cdot \deg f_i$.
\end{lemm}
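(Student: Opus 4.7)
The strategy is to realise $H^1(E^\vee_{\wtil{\cM}^p/\wtil{\fM}^w}|_{\wtil{\cM}^{p,\bar{\mu}}})$ as the cokernel of a morphism between locally free sheaves, and to verify the morphism is injective on every fibre. Since $\wtil{\cM}^p$ is pulled back from $\bar{M}_{1,k}(\PP^n,d)^p$ along $\wtil{\fM}^w\to\fM^w$, the relative POT $E_{\wtil{\cM}^p/\wtil{\fM}^w}$ is the pull-back of the Chang--Li POT, and its dual is quasi-isomorphic to $R\pi_*$ of the two-term complex
\[
\cO_C \xrightarrow{(s,\,p)} L^{\oplus(n+1)} \oplus \bigoplus_i L^{-\deg f_i}\otimes \omega_\pi
\]
with $\cO_C$ placed in degree $-1$. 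Running the hypercohomology spectral sequence identifies
\[
H^1(E^\vee_{\wtil{\cM}^p/\wtil{\fM}^w}|_{\wtil{\cM}^{p,\bar{\mu}}}) = \mathrm{coker}\!\left(\phi\colon R^1\pi_*\cO_C \to R^1\pi_*L^{\oplus(n+1)} \oplus \bigoplus_i R^1\pi_*(L^{-\deg f_i}\otimes\omega_\pi)\right),
\]
where $\phi$ is induced by the universal pair of sections $(s,p)$.

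\textbf{Local freeness on the $\bar{\mu}$-stratum.} Every point of $\wtil{\cM}^{p,\bar{\mu}}$ corresponds to $(C,L,s,p)$ with $C$ a curve of type $\bar{\mu}$: a genus one core $E$ with $L|_E\cong \cO_E$, glued at $\ell$ nodes to rational tails $R_j$ with $L|_{R_j}\cong \cO(d_j)$. The normalisation sequence for $L$, combined with the surjectivity of evaluation $H^0(\til C, L)\twoheadrightarrow \bigoplus \CC_{\mathrm{nodes}}$ (which holds because $L|_{R_j}$ has positive degree and $L|_E=\cO_E$), gives $h^1(C,L)=1$. An analogous computation using Serre duality and the restrictions $\omega_C|_E=\cO_E(\textstyle\sum q_j)$, $\omega_C|_{R_j}=\cO_{R_j}(-1)$ yields $h^1(C, L^{-\deg f_i}\otimes \omega_C)=1+d\deg f_i$, while $h^1(C,\cO_C)=1$ is the Hodge number. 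These ranks are locally constant on $\wtil{\cM}^{p,\bar{\mu}}$, so cohomology and base change makes $R^1\pi_*\cO_C$, $R^1\pi_*L^{\oplus(n+1)}$ and $R^1\pi_*(L^{-\deg f_i}\otimes\omega_\pi)$ locally free of ranks $1$, $n+1$ and $1+d\deg f_i$.

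\textbf{Fibrewise injectivity and conclusion.} Since $R^1\pi_*\cO_C$ is of rank $1$, it suffices to show that a single component of $\phi$ is fibrewise nonzero. The surjectivity at the nodes together with $H^1(R_j,L|_{R_j})=0$ upgrades to restriction isomorphisms $H^1(C,\cO_C)\cong H^1(E,\cO_E)$ and $H^1(C,L)\cong H^1(E,L|_E)=H^1(E,\cO_E)$; under these, the component of $\phi$ landing in $R^1\pi_*L^{\oplus(n+1)}$ becomes scalar multiplication by the vector $(s_0|_E,\dots,s_n|_E)\in \CC^{n+1}$. Since $L|_E\cong\cO_E$ each $s_i|_E$ is a constant, and since the sections $s_i$ have no common zero on $C$ (they define the map to $\PP^n$), at least one $s_i|_E$ is nonzero. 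Hence $\phi$ is a fibrewise injection of locally free sheaves, and its cokernel is locally free of rank $(n+1)+\sum_i(1+d\deg f_i)-1 = n+m+d\sum_i \deg f_i$. The main obstacle is the correct identification of $E_{\wtil{\cM}^p/\wtil{\fM}^w}$ compatibly with the POT over $\wtil{\fB}$ from Section~\ref{Condition1}, since the $\CC^*$-gerbe $\wtil{\fB}\to\wtil{\fM}^w$ must be absorbed without contributing a spurious summand to $H^1$; the nowhere vanishing of $s$ is precisely what allows this absorption.
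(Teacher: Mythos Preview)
Your proof is correct and reaches the same conclusion, but by a genuinely different route than the paper's.

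The paper argues by a sandwich. It invokes two compatible triangles, one over $\wtil{\fM}^{div}$ and one over $\wtil{\fB}$, each yielding a surjection onto $H^1(E^\vee_{\wtil{\cM}^p/\wtil{\fM}^w}|_{\wtil{\cM}^{p,\bar\mu}})$. From the $\wtil{\fM}^{div}$-side, citing \cite{CL15,HL10} that $E^\vee_{\wtil{\cU}^p_H/\wtil{\fM}^{div}}|_{\wtil{\cU}^{p,\bar\mu}_H}$ is quasi-isomorphic to a two-term complex with \emph{zero} differential, one reads off that the source has rank exactly $n+m+\sum_i d\deg f_i$, giving an upper bound. From the $\wtil{\fB}$-side, the source has rank $n+1+m+\sum_i d\deg f_i$ and the kernel is a quotient of the rank-one Hodge bundle, giving the matching lower bound. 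No explicit knowledge of the connecting map is used.

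You instead work only with the $\wtil{\fB}$-triangle and prove directly that the connecting map $\phi\colon R^1\pi_*\cO_C \to R^1\pi_*L^{\oplus(n+1)}\oplus\cdots$ is fibrewise injective, by identifying its $L^{\oplus(n+1)}$-component with multiplication by $(s_0|_E,\dots,s_n|_E)$ on $H^1(E,\cO_E)$ and noting the $s_i$ have no common zero. This is more geometric and avoids the $\wtil{\fM}^{div}$ detour entirely. The trade-off is that you must carry out the fibrewise cohomology computation yourself (including at points of $\wtil{\cM}^{p,\bar\mu}$ where the curve degenerates further, i.e.\ the rational tails become trees or the core acquires extra components), whereas the paper offloads this to the cited Hu--Li result. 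Your normalisation-sequence argument does extend to those degenerations since $L=ev^*\cO(1)$ stays trivial on the contracted genus-one subcurve and has nonnegative degree on every rational component, but it would be worth saying so explicitly rather than describing only the generic curve shape.
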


\begin{proof}
For a hyperplane $H \subset \PP^n$ we assign the local chart $\wtil{\cU}^p_H \subset \wtil{\cM}^p$ as in Section \ref{Condition1}. 
It is enough to show that 
$H^1( E_{\wtil{\cU}^p_H/\wtil{\fM}^w }^\vee|_{\wtil{\cU}_H^{p,\bar{\mu}}} )$
is locally free of rank $n + m + \sum_i d\cdot \deg f_i$ where $\wtil{\cU}_H^{p,\bar{\mu}}:=\{t_i = 0\} \subset \wtil{\cU}^p_H$.

Recall that the natural relative perfect obstruction theory of $\wtil{\cU}^p_H$ relative to $\wtil{\fM}^{div}$ introduced in \cite[Proposition 2.5]{CL11fields} is
$$E_{\wtil{\cU}^p_H/\wtil{\fM}^{div}}= \left(\RR\pi_*ev^* \cO_{\PP^n}(H)^{\oplus n}   \bigoplus\limits_i \RR\pi_* \left( ev^* \cO_{\PP^n}(-\deg f_i \cdot H) \otimes \omega_{\cC} \right) \right)^\vee.$$ 
Two short exact sequences
\begin{align*}
&0 \to \cO_{\PP^n} \to \cO(H)^{\oplus n+1}_{\PP^n} \to T_{\PP^n} \to 0 \\
&0 \to \cO_{\PP^n}(-H) \to \cO_{\PP^n} \to \cO_H \to 0
\end{align*}  
give rise to an exact triangle
\begin{align} \label{ExactTri}
\RR\pi_* ev^* \cO_{\PP^n}(H)^{\oplus n} \to \RR\pi_* ev^* T_{\PP^n} \to \RR\pi_* ev^*\cO_H (H) \stackrel{+1}{\lra} .
\end{align}
Since the effective divisor $ev^*H \subset C$ is the sum of distinct, smooth points, $\RR\pi_* ev^*\cO_{H}(H) \cong \pi_H^*T_{\wtil{\fM}^{div}/\wtil{\fM}^w }$, where $\pi_H: \wtil{\cU}^p_H \to \wtil{\fM}^{div}$ is the projection morphism. 
Thus, we have the following diagram of triangles
\begin{align} \label{POTdiagram1}
\xymatrix{
\RR\pi_* ev^*\cO_{H}(H)[-1] \ar[r] & E_{\wtil{\cU}^p_H/\wtil{\fM}^{div} }\dual \ar[r] & E_{\wtil{\cU}^p_H/\wtil{\fM}^w }\dual \ar[r]^(0.6){+1} &  \\
\pi_H^* T_{\wtil{\fM}^{div}/\wtil{\fM}^w}[-1] \ar[r] \ar[u]^{\cong} & T_{\wtil{\cU}^p_H/\wtil{\fM}^{div} } \ar[r] \ar[u] & T_{\wtil{\cU}^p_H/\wtil{\fM}^w } \ar[u] \ar[r]^(0.6){+1} &,  
}
\end{align}
where two right-hand vertical morphisms are given by dual of relative perfect obstruction theories. The first row of the diagram \eqref{POTdiagram1} comes from \eqref{ExactTri}. 
If we restrict the above sequence in \eqref{POTdiagram1} to $\wtil{\cU}_H^{p,\bar{\mu}}$ and take the first cohomology, then we have the following exact sequence
\begin{align}\label{obstructioncompare1}
\to \pi_*ev^*\cO_{H}(H) \to H^1( E_{\wtil{\cU}^p_H/\wtil{\fM}^{div} }^\vee|_{\wtil{\cU}^{p,\bar{\mu}}_{H}} ) \to H^1( E_{\wtil{\cU}^p_H/\wtil{\fM}^w }^\vee|_{\wtil{\cU}_H^{p,\bar{\mu}}} ) \to 0.
\end{align}
Since we have
\begin{align*}
& E_{\wtil{\cU}^p_H/\wtil{\fM}^{div} }^\vee|_{\wtil{\cU}_H^{p,\bar{\mu}}} \\ 
&  \cong \left( \RR \pi_{*}ev^*\cO_{\PP^n}(H)^{\oplus n} \oplus \bigoplus\limits_i \RR\pi_* \left( ev^* \cO_{\PP^n}(-\deg f_i \cdot H) \otimes \omega_{\cC} \right) \right) \left|_{ \; \wtil{\cU}_H^{p,\bar{\mu}}} \right. \\
& \cong [\cO_{\wtil{\cU}_H^{p,\bar{\mu}}}^{\oplus d+1} \stackrel{0}{\lra} \cO_{\wtil{\cU}_H^{p,\bar{\mu}}}  ]^{\oplus n} \oplus \bigoplus_i [\cO_{\wtil{\cU}_H^{p,\bar{\mu}}} \stackrel{0}{\lra} \cO_{\wtil{\cU}_H^{p,\bar{\mu}}}^{\oplus (d\cdot \deg f_i + 1)}  ]
\end{align*}
from \cite{CL15,HL10}, we have $H^1( E_{\wtil{\cU}^p_H/\wtil{\fM}^{div} }^\vee|_{\wtil{\cU}_H^{p,\bar{\mu}}} ) \cong \cO_{\wtil{\cU}_H^{p,\bar{\mu}}}^{\oplus \left( n + m + \sum_i d\cdot \deg f_i \right)}$. 
Thus we have $\rank(H^1( E_{\wtil{\cU}^p_H/\wtil{\fM}^w }^\vee|_{\wtil{\cU}_H^{p,\bar{\mu}}} )) \leq n + m + \sum_i d\cdot \deg f_i$.

Consider the following diagram in the proof of \cite[Lemma 2.8]{CL11fields}
\begin{align} \label{POTdiagram2}
\xymatrix{
R\pi_* \cO_{\cC} \ar[r] & E_{\wtil{\cU}^p_H/\wtil{\fB} }\dual \ar[r] & E_{\wtil{\cU}^p_H/\wtil{\fM}^w }\dual \ar[r]^(0.6){+1} &  \\
\pi_{\wtil{\fB}}^* T_{\wtil{\fB}/\wtil{\fM}^w}[-1] \ar[r] \ar[u]^{\cong} & T_{\wtil{\cU}^p_H/\wtil{\fB} } \ar[r] \ar[u] & T_{\wtil{\cU}^p_H/\wtil{\fM}^w } \ar[u] \ar[r]^(0.6){+1} &,  
}
\end{align}
where $\pi_{\wtil{\fB}} : \wtil{\cU}^p_H \to \wtil{\fB}$ is the natural projection morphism.
If we restrict the above sequence in \eqref{POTdiagram2} to $\wtil{\cU}_H^{p, \bar{\mu}}$ and take the first cohomology, then we have the following exact sequence:
\begin{align*} 
 \to R^1\pi_*\cO_{\cC} \to H^1( E_{\wtil{\cU}^p_H/\wtil{\fB} }^\vee|_{\wtil{\cU}_H^{p, \bar{\mu}}} ) \to H^1( E_{\wtil{\cU}^p_H/\wtil{\fM}^w }^\vee|_{\wtil{\cU}_H^{p, \bar{\mu}}} ) \to 0.
\end{align*}
Since we have 
\begin{align*}
& E_{\wtil{\cU}^p_H/\wtil{\fB} }^\vee|_{\wtil{\cU}_H^{p,\bar{\mu}}} \\
&  \cong \left( \RR \pi_{*}ev^*\cO_{\PP^n}(H)^{\oplus (n+1)} \oplus \bigoplus\limits_i \RR\pi_* \left( ev^* \cO_{\PP^n}(-\deg f_i \cdot H) \otimes \omega_{\cC} \right) \right) \left|_{ \; \wtil{\cU}_H^{p,\bar{\mu}}} \right. \\
& \cong [\cO_{\wtil{\cU}_H^{p,\bar{\mu}}}^{\oplus d+1} \stackrel{0}{\lra} \cO_{\wtil{\cU}_H^{p,\bar{\mu}}}  ]^{\oplus (n+1)} \oplus \bigoplus_i [\cO_{\wtil{\cU}_H^{p,\bar{\mu}}} \stackrel{0}{\lra} \cO_{\wtil{\cU}_H^{p,\bar{\mu}}}^{\oplus (d\cdot \deg f_i + 1)}  ],
\end{align*} 
we conclude that $\rank(H^1( E_{\wtil{\cU}^p_H/\wtil{\fM}^w }^\vee|_{\wtil{\cU}_H^{p, \bar{\mu}}} )) \geq n + m + \sum_i d\cdot \deg f_i$. 
It implies that $H^1( E_{\wtil{\cU}^p_H/\wtil{\fM}^w }^\vee|_{\wtil{\cU}_H^{p, \bar{\mu}}} )$ is a vector bundle of rank $n + m + \sum_i d\cdot \deg f_i$. 
\end{proof}

Similarly, the coherent sheaf $H^1( E_{\wtil{\cM}/\wtil{\fM}^w }^\vee|_{\wtil{\cM}^{ \bar{\mu}}} )$ is locally free of rank $n $.
The obstruction bundles $H^1( E_{\wtil{\cU}^p_H/\wtil{\fM}^{div} }^\vee|_{\wtil{\cU}^{p,\bar{\mu}}_{H}} )$ and $H^1( E_{\wtil{\cU}^p_H/\wtil{\fM}^w }^\vee|_{\wtil{\cU}_H^{p,\bar{\mu}}} )$ are isomorphic since they have the same rank in the exact sequence \eqref{obstructioncompare1}.
The bundles $H^1( E_{\wtil{\cU}_H/\wtil{\fM}^{div} }^\vee|_{\wtil{\cU}^{\bar{\mu}}_{H}} )$ and $H^1( E_{\wtil{\cU}_H/\wtil{\fM}^w }^\vee|_{\wtil{\cU}_H^{\bar{\mu}}} )$ are also isomorphic.

\begin{coro}\label{obstructionmorphism2}
We can glue the morphism \eqref{obstructionmorphism1}  to obtain
\[
N_{\wtil{\fM}^{rat}/\wtil{\fM}^w}|_{\wtil{\cM}^{\bar{\mu}}} \to H^1(E_{\wtil{\cM}/ \wtil{\fM}^w}\dual|_{\wtil{\cM}^{\bar{\mu}}}) 
\]
whose zero locus is $\wtil{\cM}^{\bar{\mu}} \cap \wtil{\cM}^{red}$.
\end{coro}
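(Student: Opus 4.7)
The plan is to first identify the target of the local morphism \eqref{obstructionmorphism1} with the one in the statement, then construct an intrinsic global morphism whose local restrictions recover \eqref{obstructionmorphism1}, and finally read off the zero locus from the local description.

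First, by the discussion immediately following Lemma \ref{Lem:Aeq} applied to $\wtil{\cM}$ in place of $\wtil{\cM}^p$, there is a canonical isomorphism
$$H^1(E_{\wtil{\cU}_H/\wtil{\fM}^{div}}\dual|_{\wtil{\cU}_H^{\bar{\mu}}}) \; \cong \; H^1(E_{\wtil{\cU}_H/\wtil{\fM}^w}\dual|_{\wtil{\cU}_H^{\bar{\mu}}}),$$
under which \eqref{obstructionmorphism1} becomes a local morphism
$$\phi_H \, : \, N_{\wtil{\fM}^{rat}/\wtil{\fM}^w}|_{\wtil{\cU}_H^{\bar{\mu}}} \; \longrightarrow \; H^1(E_{\wtil{\cM}/\wtil{\fM}^w}\dual|_{\wtil{\cM}^{\bar{\mu}}})|_{\wtil{\cU}_H^{\bar{\mu}}}.$$

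Second, I would construct a global morphism producing the $\phi_H$ upon restriction. Consider the compatible factorizations
$$\wtil{\cM}^{\bar{\mu}} \hookrightarrow \wtil{\cM} \longrightarrow \wtil{\fM}^{rat} \hookrightarrow \wtil{\fM}^w, \qquad \wtil{\cM}^{\bar{\mu}} \longrightarrow \wtil{\fM}^{\bar{\mu}} \hookrightarrow \wtil{\fM}^{rat}.$$
The distinguished triangle of relative cotangent complexes attached to $\wtil{\cM} \to \wtil{\fM}^{rat} \to \wtil{\fM}^w$, together with the comparison map from the relative perfect obstruction theory $E_{\wtil{\cM}/\wtil{\fM}^w}$, yields (after dualizing and taking $H^1$ on $\wtil{\cM}^{\bar{\mu}}$) a canonical global morphism $N_{\wtil{\fM}^{rat}/\wtil{\fM}^w}|_{\wtil{\cM}^{\bar{\mu}}} \to H^1(E_{\wtil{\cM}/\wtil{\fM}^w}\dual|_{\wtil{\cM}^{\bar{\mu}}})$. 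Using the local description of Section \ref{Decomp:Coord}, one verifies that this intrinsic morphism restricts on each chart $\wtil{\cU}_H^{\bar{\mu}}$ to $\phi_H$: the conormal direction is locally generated by $dt_i$, and the $t_i$-derivative of the defining equation $t y_j$ along $\{t_i = 0\}$ equals $(t/t_i)\, y_j$, which after the trivialization identifying $H^1(E\dual|_{\wtil{\cU}_H^{\bar{\mu}}})$ with $\cO^n$ recovers the expression $1 \mapsto (y_1,\dots,y_n)$.

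Finally, the zero-locus assertion is local. On $\wtil{\cU}_H^{\bar{\mu}} = \{t_i=0\}$, the expression $1 \mapsto (y_1,\dots,y_n)$ shows $\phi_H$ vanishes precisely where all $y_j$ vanish, i.e., on $\{y_1=\cdots=y_n=0\} \cap \{t_i=0\}$, which equals $\wtil{\cU}_H^{red} \cap \wtil{\cU}_H^{\bar{\mu}}$. Covering $\wtil{\cM}$ by the $\wtil{\cU}_H$ and gluing the local identifications then yields the global equality $\{\phi = 0\} = \wtil{\cM}^{red} \cap \wtil{\cM}^{\bar{\mu}}$.

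The main obstacle will be the intrinsic construction in step two: one must verify that the morphism produced by the cotangent-complex triangle and the obstruction-theory comparison matches, in the local model, the concrete expression $1 \mapsto (y_1,\dots,y_n)$ of \eqref{obstructionmorphism1}, and that the trivializations of $H^1(E\dual|_{\bar{\mu}})$ used on different charts $\wtil{\cU}_H$ are compatible with the canonical identification established after Lemma \ref{Lem:Aeq}. Once this naturality is in place, gluing and the zero-locus computation are immediate from the local picture of Section \ref{Decomp:Coord}.
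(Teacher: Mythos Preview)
Your proposal is essentially correct and follows the approach the paper leaves implicit: the paper states Corollary~\ref{obstructionmorphism2} without proof, treating it as an immediate consequence of the naturality of \eqref{obstructionmorphism1} together with the identification $H^1(E_{\wtil{\cU}_H/\wtil{\fM}^{div}}\dual|_{\wtil{\cU}_H^{\bar{\mu}}}) \cong H^1(E_{\wtil{\cU}_H/\wtil{\fM}^w}\dual|_{\wtil{\cU}_H^{\bar{\mu}}})$ established right after Lemma~\ref{Lem:Aeq}. Your plan of constructing an intrinsic global morphism via the cotangent-complex triangle and checking it recovers \eqref{obstructionmorphism1} locally is exactly what makes this ``naturality'' precise, and your zero-locus computation is the intended one.

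One small correction: the factorization $\wtil{\cM} \to \wtil{\fM}^{rat}$ you write in step two does not exist, since $\wtil{\cM}$ maps to $\wtil{\fM}^w$ but not to the divisor $\wtil{\fM}^{rat}$. You should instead work with $\wtil{\cM}^{\bar{\mu}} \to \wtil{\fM}^{\bar{\mu}} \hookrightarrow \wtil{\fM}^{rat} \hookrightarrow \wtil{\fM}^w$ (or with $\wtil{\cM}^{rat} \to \wtil{\fM}^{rat}$), and restrict the obstruction theory $E_{\wtil{\cM}/\wtil{\fM}^w}$ to $\wtil{\cM}^{\bar{\mu}}$ before invoking the triangle. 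With that adjustment the construction goes through: the connecting map from $N_{\wtil{\fM}^{rat}/\wtil{\fM}^w}^\vee[1]$ into the restricted obstruction complex gives, after taking $H^1$, the desired global morphism, and its local expression is recovered by differentiating the defining equations $ty_j$ in the $t_i$-direction along $\{t_i=0\}$ as you indicate.
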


By \eqref{POTdiagram2}, we obtain a diagram of short exact sequences of abelian cones
\begin{align}\label{SESofH1:2}
\xymatrix{
 0 \ar[r] & R^1\pi_*\cO_{\cC_{\wtil{\cU}_H^{p, \bar{\mu}}} } \ar[r] & H^1( E_{\wtil{\cU}^p_H/\wtil{\fB} }^\vee|_{\wtil{\cU}_H^{p, \bar{\mu}}} ) \ar[r] &  H^1( E_{\wtil{\cU}_H^p/\wtil{\fM}^w }^\vee|_{\wtil{\cU}_H^{p, \bar{\mu}}} ) \ar[r] & 0 \\
 0 \ar[r]  & R^1\pi_*\cO_{\cC_{\wtil{\cU}_H^{p, \bar{\mu}}} } \ar[r] \ar@{=}[u]  & C_{\wtil{\cU}^p_H/\wtil{\fB}} |_{\wtil{\cU}_H^{p, \bar{\mu}}}
 \ar[r] \ar@{^(->}[u] & C_{\wtil{\cU}^p_H/\wtil{\fM}^w} |_{\wtil{\cU}_H^{p, \bar{\mu}}}  \ar[r] \ar@{^(->}[u] & 0 ,
 }
\end{align}
where $C_{\wtil{\cU}^p_H/\wtil{\fB}} |_{\wtil{\cU}_H^{p, \bar{\mu}}}$ (resp. $C_{\wtil{\cU}^p_H/\wtil{\fM}^w} |_{\wtil{\cU}_H^{p, \bar{\mu}}}$) is the coarse moduli space of the cone stack $\fC_{\wtil{\cU}^p_H/\wtil{\fB}} |_{\wtil{\cU}_H^{p, \bar{\mu}}}$ (resp. $\fC_{\wtil{\cU}^p_H/\wtil{\fM}^w} |_{\wtil{\cU}_H^{p, \bar{\mu}}}$).
We glue these local exact sequences to obtain a global fiber diagram 
\[
\xymatrix{
C_{\wtil{\cM}^p/\wtil{\fB}} |_{\wtil{\cM}^{p, \bar{\mu}}} \ar@{^(->}[r] \ar[d] & H^1( E_{\wtil{\cM}^p/\wtil{\fB} }^\vee|_{\wtil{\cM}^{p, \bar{\mu}}} ) \ar@{->>}[d]^{\theta} \\
C_{\wtil{\cM}^p_H/\wtil{\fM}^w} |_{\wtil{\cM}_H^{p, \bar{\mu}}} \ar@{^(->}[r] & H^1( E_{\wtil{\cM}^p/\wtil{\fM}^w }^\vee|_{\wtil{\cM}^{p, \bar{\mu}}} )
}
\]
where $\theta$ is the morphism of vector bundles induced by $\theta_2$.
In other words, we have $\theta^*(C_{\wtil{\cM}^p_H/\wtil{\fM}^w} |_{\wtil{\cM}_H^{p, \bar{\mu}}}) = C_{\wtil{\cM}^p/\wtil{\fB}} |_{\wtil{\cM}^{p, \bar{\mu}}}$.

\section{Normal bundles of node-identifying morphisms} \label{Sect:Normal}

In this section, we give some analogues of the results in \cite[Section 2,3 and 4]{VZ08} and \cite[Section 3.4]{Zin08standard}. 
Here is the summary of what we will do in this section.
For a fixed $\mu \in \fS$, we consider the following fiber diagram of the node-identifying morphism $\iota_{\mu}$ and the blow-up morphism $\wtil{\fM}^w \to \fM^w$ 
\[
\xymatrix{ 
\fM_{\mathrm{fib}} \ar[r] \ar@{}[rd]|{\Box} \ar[r]^-{\wtil{\iota}_{\mu}} \ar[d]  & \wtil{\fM}^w \ar[d] \\
\bar{\cM}_{1,K_0 \cup [\ell]} \times \fM^w_{0,\bullet\cup K_1,d_1} \times \cdots \times \fM^w_{0,\bullet\cup K_{\ell},d_{\ell}} \ar[r]^-{\iota_{\mu}} & \fM^w
.}
\]
We will express the space $\fM_{\mathrm{fib}}$ precisely later.
Then  
\begin{align} \label{Comp:NormalB}
-\  \text{the normal bundle $N_{\wtil{\iota}_{\mu}}$ is obtained by some modifications of $N_{\iota_{\mu}}$}. \ -
\end{align}
Note that $N_{\wtil{\iota}_{\mu}} \cong \wtil{\iota}^*_\mu N_{\wtil{\fM}^{\bar{\mu}}/\wtil{\fM}^w}$. 
It will be combined with \eqref{Remark1}, \eqref{Remark2}, Corollary \ref{obstructionmorphism2}, and the explicit description of $N_{\iota_{\mu}}$ \eqref{Normal:First} below through \eqref{Comp:NormalB} for the computation of $A^{\bar{\mu}}_{k,d}$.
We fix an element 
$$\bar{\mu} = \{ (d_1,K_1),\dots, (d_{\ell}, K_{\ell}) \} \in \bar{\fS}.$$
Note that $\iota_{\mu}$ is unramified finite morphism, but not an embedding. Moreover it factors through the morphism \eqref{glueingnode}.
The image of $\iota_{\mu}$ is an open substack of $\fM^{\bar{\mu}}$ which is the image of \eqref{glueingnode}.
So the relative normal bundle of $\iota_{\mu}$ is 
\begin{align} \label{Normal:First}
N_{\iota_{\mu}} = \oplus_{i\in [\ell]} L_i \boxtimes L_{\bullet,i}
\end{align}
where $L_i$ is the $i$-th tautological bundle on $\bar{\cM}_{1,K_0\sqcup [\ell] }$ and $L_{\bullet,i}$ is the tautological bundle on $\fM^w_{0,\bullet\sqcup K_i, d_i}$ corresponding to the marked point $\bullet$ of $\fM^w_{\bullet\sqcup K_i,d_i}$.

\subsection{Successive blow-up and normal bundles}\label{Succblowup}
Now, we discuss the detail of \eqref{Comp:NormalB}.
First we want to describe a successive blow-up $\wtil{\fM}^w \to \fM^w$, and explain how the relative normal bundle $N_{\iota_{\mu}}$ is modified along the blow-up later.
We introduce a partial order on $\bar{\fS}$ following \cite[(2-2)]{VZ08}. 
For $\bar{\mu}_a, \bar{\mu}_b \in \bar{\fS}$, 
$$\bar{\mu}_a \c< \bar{\mu}_b \iff
\fM^{\bar{\mu}_a} \cap \fM^{\bar{\mu}_b} \neq \varnothing, \ l(\mu_a) + |K_0(\mu_a)| < l(\mu_b) + |K_0(\mu_b)|.$$ 
We then fix any complete order $<$ on $\bar{\fS}$ an extension of this partial order. Namely, 
$$\bar{\fS} = \{ \bar{\mu}_1 < \bar{\mu}_2 < \dots < \bar{\mu}=\bar{\mu}_{N(\bar{\mu})} < \bar{\mu}_{N(\bar{\mu})+1} < \dots < \bar{\mu}_N  \}.$$ 
We denote $\fM^{|0} := \fM^w$, which is an initial space of the sequence of blow-ups.
Let $\pi_1 : \fM^{|1} := \Bl_{\fM^{\bar{\mu}_1}} \fM^{|0} \to \fM^{|0}$ be the blow-up morphism, and
$\fM^{\bar{\mu}_j|1}$ be the proper transforms of $\fM^{\bar{\mu}_j}$ via $\pi_1$. 
Inductively we define the blow-up morphism $\pi_i : \fM^{|i}:=\Bl_{\fM^{\bar{\mu}_i|i-1}} \fM^{|i-1} \to \fM^{|i-1}$ and the proper transforms $\fM^{\bar{\mu}_j|i}$ of $\fM^{\bar{\mu}_j|i-1}$ via $\pi_i$. 
In the final step, we obtain $\wtil{\fM}^w : = \fM^{|N}$ and $\wtil{\fM}^{\bar{\mu}_j} := \fM^{\bar{\mu}_j|N}$. Let $\wtil{\pi} : \wtil{\fM}^w \to \fM^w$ be the composition of blow-up morphisms $\pi_1,\dots, \pi_N$.
Note that $\wtil{\pi}$ does not depend on choices of complete orders.

\medskip

At $i$-th step of blow-up, we have the node-identifying morphism $\iota_{\mu}^i$ obtained by the proper transform of the original node-identifying morphism $\iota_{\mu}$.  
We will get $\iota^N_\mu = \wtil{\iota}_\mu$.
Now, we want to see how $N_{\iota_\mu}$ \eqref{Normal:First} is related to the normal bundle of $\iota_{\mu}^i$ by an induction on $i$, which finally explains \eqref{Comp:NormalB}.

\subsubsection*{Initial step of the induction}

The following argument checks the conditions of \cite[Lemma 3.5]{VZ08} which is the key idea for the inductive argument.
We introduce some notations first.
We define an index set $A(\mu_a,\mu_b)$ for any $\mu_a, \mu_b \in \fS$,  
\begin{align*}
&A(\mu_a, \mu_b) := \left\{ \rho : [\ell(\mu_b)] \surra [\ell(\mu_a)] \left| 
\begin{array}{l}
K_i(\mu_b) \subset K_{\rho(i)}(\mu_a), \\
\sum_{j' \in \rho^{-1}(j)} d_{j'}(\mu_b) = d_j(\mu_a)  
\end{array}
\right.
\right\}
\end{align*} 
which is a more finer index set than the one in \cite[Section 4.2]{VZ08}.
Let 
\begin{itemize}
\item $I_j(\rho) \  := \rho^{-1}(j)$, 
\item $K_j(\rho) :=  K_j(\mu_b) \setminus (\cup_{j' \in I_j(\rho)} K_{j'}(\mu_a))$,
\item $I_0(\rho) \ := \rho^{-1}( \{ j \in [\ell(\mu_a)] : \ |K_j(\mu_a) \sqcup I_j(\rho)|=1, \  K_j(\rho)=\varnothing \} )$, and 
\item $K_0(\rho) := K_0(\mu_b) \setminus ( \cup_{j \in [\ell(\mu_a)]} K_j(\mu) )$.
\end{itemize}
Note that $|K_j(\mu_a) \sqcup I_j(\rho)| \geq 2$ for $j \in [\ell(\mu_a)] \setminus I_0(\rho)$.
Geometrically, each $\rho$ corresponds to an intersection component of $\fM^{\bar{\mu}_a}$ and $\fM^{\bar{\mu}_b}$.
Recall that each generic element in $\fM^{\bar{\mu}_a}$ has a genus one component $C_0$ in $\bar{\cM}_{1, K_0(\mu_a) }$ glued to genus zero curves $C_1,\dots, C_{\ell(\mu_a) }$ with marked points corresponding to elements in $K_j(\mu_a)$ through the nodal point $j \in [\ell(\mu_a)]$.
This element considered as in the intersection component corresponding to $\rho$ has the following property. 
For each $j \in [\ell(\mu_a)]$, $C_j$ has another $|I_j(\rho)|$-nodal points if $|K_j(\mu_a) \sqcup I_j(\rho)| \geq 2$, and each attached component has $K_j(\mu_a)$ marked points.
$K_j(\rho)$ denotes the remaining marked points on $C_j$.
We leave the interpretations of $K_0(\rho)$ and $I_0(\rho)$ for the readers.

For simplicity, we denote $\fM_{0,\mu} := \prod\limits_{i \in [\ell]} \fM^w_{0, \bullet\sqcup K_i,d_i }$.
For any $i < N(\bar{\mu})$, we have
\begin{align}\label{VZcondition1}
 \iota_{\mu}^{-1}(\fM^{\bar{\mu}_i}) = \bigsqcup\limits_{\rho \in A(\bar{\mu}_i, \mu) } \bar{\cM}_{1,\rho} \times \fM_{0,\mu}
 \end{align}
where 
$$A(\bar{\mu}_a, \mu_b) := \sqcup _{\{ \mu' | \bar{\mu'}= \bar{\mu}_a \}} A(\mu' ,\mu_b),$$ and
$\bar{\cM}_{1,\rho} \subset \bar{\cM}_{1, K_0(\mu)\sqcup [\ell(\mu)]}$ is an image of the node-identifying morphism
\[
\bar{\cM}_{1, K_0(\rho)\sqcup I_0(\rho)} \times \prod\limits_{j \in [\ell(\mu_i)]\setminus \rho(I_0(\rho))} \bar{\cM}_{0, \bullet\sqcup I_j(\rho)\sqcup K_j(\rho)} \to \bar{\cM}_{1, K_0(\mu)\sqcup [\ell(\mu)]}.
\]
And we have 
\begin{align}\label{VZcondition2}
\iota_{\mu}^*\left( N_{\iota_{\mu}( \bar{\cM}_{1,\rho} \times \fM_{0,\mu} ) / \fM^{\bar{\mu}_i} } \right) = \bigoplus\limits_{i \in [\ell]\setminus I_0(\rho)} L_i \boxtimes L_{\bullet,i} |_{\bar{\cM}_{1,\rho}\times \fM_{0,\mu} }.
\end{align}

Let
\begin{itemize}
\item
$L_{i,0} := L_i$
\item
$\bar{\cM}^0_{1,(K_0,[\ell])}:= \bar{\cM}_{1,K_0\sqcup [\ell]}$
\item
$\bar{\cM}^1_{1,(K_0,[\ell])}$ : The blow-up of $\bar{\cM}^0_{1,(K_0,[\ell])}$ along the locus $ \bigsqcup\limits_{\rho \in A(\mu_1 ,\mu)} \bar{\cM}^0_{1,\rho} $ where $\bar{\cM}^0_{1,\rho} := \bar{\cM}_{1,\rho}$. Note that we can check $\bar{\cM}^0_{1,\rho}$ are disjoint to each other by \cite[Lemma 2.6]{VZ08}.
\item
$D_{\rho}$ : Exceptional divisors of the blow-up $\pi_1$ supported on the locus $\bar{\cM}^0_{1,\rho}$.
\item
By abuse of notation, we again denote $\pi_1$ by the blow-up morphism $\bar{\cM}^1_{1,(K_0,[\ell])} \to \bar{\cM}^0_{1,(K_0,[\ell])}$.
\end{itemize}
We note that the collection of unramified morphisms $\{ \iota_{\mu_1},\dots,\iota_{\mu_N} \}$ is properly self-intersecting (see \cite[Definition 3.2]{VZ08} for the definition) and the image of $\iota_{\mu_1}$ is smooth by \cite[Section 4.3, (I8)]{VZ08}. Hence we can apply \cite[Corollary 3.4]{VZ08} to guarantee that the induced node-identifying morphism
\[
\iota^1_{\mu} : \bar{\cM}^1_{1,(K_0,[\ell])} \times \fM_{0,\mu} \to \fM^{|1}
\]
is again an unramified morphism whose image is an open substack of $\fM^{\bar{\mu}|1}$.
By \eqref{VZcondition1} and \eqref{VZcondition2}, we can apply \cite[Lemma 3.5]{VZ08} so that we have
\begin{align*}
N_{\iota_{\mu}^1} = \bigoplus\limits_{i \in [\ell]} \pi_1^*L_{i,0} \left( -\sum\limits_{\rho \in A(\bar{\mu}_1,\mu) | i \in I_0(\rho)} D_{\rho} \right) \boxtimes L_{\bullet,i}.
\end{align*}

\subsubsection*{Second step of the induction: $j< N(\bar{\mu})$ case}

Inductively, when $j < N(\bar{\mu})$, we have the following by applying \cite[Corollary 3.4]{VZ08} and \cite[Lemma 3.5]{VZ08} repeatedly.
\begin{itemize}
\item For $i > j$ the induced node-identifying morphisms
$$
\iota_{\mu_i}^j : \bar{\cM}^j_{1,(K_0(\mu_i),[\ell(\mu_i)])} \times \fM_{0,\mu_i} \to \fM^{|j}
$$ 
are unramified, and the collection $\{\iota_{\mu_{j+1}}^j,\dots \iota_{\mu_N}^j\}$ are properly self-intersecting. Note that the image of $\iota_{\mu_{j+1}}^j$ is smooth by \cite[Section 4.3, (I8)]{VZ08}.
\item
The normal bundle is decomposed into
\begin{align*}
N_{\iota_{\mu}^j} = \bigoplus\limits_{i \in [\ell]} \pi_j^*L_{i,j-1} \left( -\sum\limits_{\rho \in A(\bar{\mu}_j, \mu) | i \in I_0(\rho)} D_{\rho} \right) \boxtimes L_{\bullet,i}.
\end{align*}
\end{itemize}
Here are the notations above which are defined inductively
\begin{itemize}
\item
$\bar{\cM}^{j-1}_{1,\rho}$ : a proper transform of $\bar{\cM}^{j-2}_{1,\rho}$ via the blow-up $$\pi_{j-1} : \bar{\cM}^{j-1}_{1,(K_0,[\ell])} \to \bar{\cM}^{j-2}_{1,(K_0,[\ell])}.$$
\item
$\bar{\cM}^j_{1,(K_0,[\ell])}$ : the blow-up of $\bar{\cM}^{j-1}_{1,(K_0,[\ell])}$ along the locus $\bigsqcup\limits_{\rho \in A(\mu_j,\mu)} \bar{\cM}^{j-1}_{1,\rho}$. 
\item
$\pi_j : \bar{\cM}^{j}_{1,(K_0,[\ell])} \to \bar{\cM}^{j-1}_{1,(K_0,[\ell])} $ : the blow-up morphism.
\item
$D_{\rho}$ : Exceptional divisors of the blow-up $\pi_{j}$ supported on the locus $\bar{\cM}^{j-1}_{1,\rho}$.
\end{itemize}

We can check the conditions of \cite[Lemma 3.5]{VZ08} that we need to apply repeatedly. For $j < N(\bar{\mu})$, we have
\[
(\iota^j_{\mu})^{-1}(\fM^{\bar{\mu}_i|j}) = \bigsqcup\limits_{\rho \in A(\bar{\mu}_i,\mu)} \bar{\cM}^j_{1,\rho} \times \fM_{0, \mu}
\]
where $\bar{\cM}^j_{1,\rho}$ is the proper transform of $\bar{\cM}^{j-1}_{1,\rho}$ via the blow-up morphism $\pi_j$. Also, when $j < N(\bar{\mu})$ we have the following for each $\rho \in A(\mu_{j+1}, \mu)$
\begin{equation*}
(\iota^j_{\mu})^*\left( N_{\iota_{\mu}^j( \bar{\cM}^j_{1,\rho} \times \fM_{0,\mu} ) / \fM^{\bar{\mu}_{j+1}|j} } \right) = \bigoplus\limits_{i \in [\ell]\setminus I_0(\rho)} L_{i,j} \boxtimes L_{\bullet,i} |_{\bar{\cM}^j_{1,\rho}\times \fM_{0,\mu} }
\end{equation*} 
where $L_{i,j} := \pi_j^* L_{i,j-1} \left( -\sum\limits_{\rho \in A(\bar{\mu}_j,\mu) | i \in I_0(\rho)} D_{\rho} \right)$.

\subsubsection*{Third step of the induction: $j = N(\bar{\mu})$ case}

For simplicity, we denote 
$$ \wtil{\cM}_{1,(K_0,[\ell])} := \bar{\cM}^{ N(\bar{\mu})-1}_{1,(K_0,[\ell])},  \ \  \wtil{L}_i := L_{i,N(\bar{\mu})-1}.$$
In \cite{VZ08}, Vakil-Zinger explained that all $\wtil{L}_i$ are isomorphic, so we denote it by $\LL$.
Let $\EE$ be the Hodge bundle on $\bar{\cM}_{1, K_0\sqcup [\ell]}$. 
Let $\EE_0 : = \EE$ and we define $\EE_i$ inductively by $\EE_i := \pi_i^* \EE_{i-1} \left( \sum\limits_{\rho \in A(\bar{\mu}_i,\mu)} D_{\rho} \right)$. Let $\wtil{\EE} := \EE_{N(\bar{\mu})-1}$. Thus we obtain
\begin{align}\label{hodgetwisting}
\wtil{\EE} = \pi^* \EE \left( \sum\limits_{\rho \in \bigsqcup_{i=1}^{N(\bar{\mu})-1} A(\bar{\mu}_i,\mu)}\wtil{D}_{\rho} \right)
\end{align}
where $\pi : \wtil{\cM}_{1,(K_0,[\ell])} \to \bar{\cM}_{1,(K_0,[\ell])}$ is the composition of the blow-ups $\pi_1,\dots,\pi_{N(\bar{\mu})-1}$, $\wtil{D_{\rho}}$ are pull-backs of $D_{\rho}$.
Note that $\LL \cong \wtil{\EE}\dual$; see \cite{VZ08}.

Now we define $\PP\fM_{0,\mu}^0 := \PP(\oplus_{i  \in [\ell]} L_{\bullet,i})$, where $\oplus_{i  \in [\ell]} L_{\bullet,i}$ is a vector bundle on $\fM_{0,\mu}$. Let $\gamma$ be the tautological bundle on $\PP\fM_{0,\mu}^0$. 
Then we have the following natural induced morphism
\begin{multline*}
\iota^{N(\bar{\mu})}_{\mu} : \wtil{\cM}_{1,(K_0,[\ell])} \times \PP\fM^0_{0,\mu} \cong \PP( N_{\iota_{\mu}^{N(\bar{\mu})-1}}) \\
\cong \PP( (\iota_{\mu}^{N(\bar{\mu})-1})^* N_{\iota^{N(\bar{\mu})-1}_{\mu}(\wtil{\cM}_{1,(K_0,[\ell])} \times \fM_{0,\mu})/\fM^{|N(\bar{\mu})-1} }) \\
\lra \PP(N_{\fM^{\bar{\mu}|N(\bar{\mu})-1}/ \fM^{|N(\bar{\mu})-1}}) \cong \fM^{\bar{\mu}|N(\bar{\mu})}.
\end{multline*}
Since $\fM^{\bar{\mu}|N(\bar{\mu})}$ is the exceptional divisor of $\Bl_{\fM^{\bar{\mu}|N(\bar{\mu})-1}} \fM^{|N(\bar{\mu})-1} \cong \fM^{|N(\bar{\mu})}$, the normal bundle $N_{\fM^{\bar{\mu}|N(\bar{\mu})} / \fM^{|N(\bar{\mu})}}$ is the tautological bundle of the projectivization $\PP(N_{\fM^{\bar{\mu}|N(\bar{\mu})-1} / \fM^{|N(\bar{\mu})-1}})$.
Thus we have
\begin{align*}
N_{\iota_{\mu}^{N(\bar{\mu})}} \cong \wtil{\EE} \boxtimes \gamma.
\end{align*}

\subsubsection*{Last step of the induction: $j > N(\bar{\mu})$ case}

An idea of the last step is again an induction on $j$.
The new inductive argument is pretty much similar to the first and second steps. 

First, we consider $j=N(\bar{\mu})+1$. 
By \cite[Section 4.3 (I8)]{VZ08}, the image of the unramified morphism $\iota_{\mu}^{N(\bar{\mu})}$, which is equal to $\fM^{\bar{\mu}|N(\bar{\mu})} \subset \fM^{|N(\bar{\mu})}$, is a  smooth divisor and the image of $\iota_{\mu_{N(\bar{\mu})+1}}^{N(\bar{\mu})}$ is smooth. 
Thus $\fM^{\bar{\mu}|N(\bar{\mu})}$ and $\fM^{\bar{\mu}'|N(\bar{\mu})}$ intersect transversally for $\bar{\mu} \neq \bar{\mu}'$. 
Hence we observe that the collection $\{ \iota_{\mu}^{N(\bar{\mu})}, \iota_{\mu_{N(\bar{\mu})+1}}^{N(\bar{\mu})}, \dots, \iota_{\mu_N}^{N(\bar{\mu})} \}$ of unramified morphisms is properly self-intersecting.
Before going further, we introduce some notations.
\begin{itemize}
\item For $\rho \in A(\mu,\mu_j)$, $j>N(\bar{\mu})$, and for $i \in [\ell(\mu)]$ let $\fM_{0,\rho_i} \subset \fM^w_{0,K_i,d_i}$ be the image of the node-identifying morphism
\[
\fM_{0,\bullet\sqcup K_i(\rho)\sqcup I_i(\rho)} \times \prod\limits_{i' \in I_i(\rho)} \fM^w_{0,\bullet \sqcup K_{i'}(\bar{\mu}_j), d_{i'}} \to \fM^w_{0,\bullet \sqcup K_i, d_i}. 
\]
\item Let $\fM_{0,\rho} := \prod\limits_{i \in [\ell]} \fM_{0,\rho_i} \subset \fM_{0,\mu} $.
\item Let $\PP\fM_{0,\rho}^0 := \PP(\oplus_{i \in [\ell]\setminus I_0(\rho)} L_{\bullet,i})$.
\end{itemize}
Note that $\PP\fM^0_{0,\rho}$ are disjoint to each other by \cite[Lemma 3.9]{VZ08}.
Now we can apply \cite[Corollary 3.4]{VZ08} so that we have the following induced unramified node-identifying morphism
\begin{align*}
\iota_{\mu}^{N(\bar{\mu})+1} : \wtil{\cM}_{1,(K_0,[\ell])} \times \PP\fM_{0,\mu}^1 \to \fM^{|N(\bar{\mu})+1}
\end{align*}
where $\PP\fM^1_{0,\mu}$ is the blow-up of $\PP\fM_{0,\mu}^0$ along the smooth locus $\bigsqcup\limits_{\rho \in A(\mu, \bar{\mu}_{N(\bar{\mu})+1})} \PP\fM^0_{0,\rho}$. 
The domain of $\iota_{\mu}^{N(\bar{\mu}+1)}$ has the product form because
\begin{align*}
(\iota^{N(\bar{\mu})}_{\mu})^{-1}(\fM^{\bar{\mu}_{N(\bar{\mu})+1}|N(\bar{\mu})}) = \bigsqcup_{\rho \in A(\mu, \bar{\mu}_{N(\bar{\mu})+1})} \wtil{\cM}_{1,(K_0,[\ell])} \times \PP\fM_{0,\rho}^0,
\end{align*}
where
$$A(\mu_a, \bar{\mu}_b) := \sqcup _{\{ \mu' | \bar{\mu'}= \bar{\mu}_b \}} A(\mu_a ,\mu');$$
see \cite[Section 4.3]{VZ08} for details.
Since $\fM^{\bar{\mu}|N(\bar{\mu})}$ is a smooth divisor in $\fM^{| N(\bar{\mu})}$, we can apply \cite[Lemma 3.5]{VZ08} to obtain
\[
N_{\iota_{\mu}^{N(\bar{\mu})+1}} = (id \times \pi_1')^* N_{\iota_{\mu}^{N(\bar{\mu})}},
\]
where $\pi'_1 : \PP\fM_{0,\mu}^1 \to \PP\fM_{0,\mu}^0$ is the blow-up morphism.

Next we consider $j > N(\bar{\mu})+1$. We check the following inductively.
\begin{itemize} 
\item
For $j > N(\bar{\mu})+1$, by by \cite[Section 4.3 (I5)]{VZ08}, we have
\[
(\iota_{\mu}^{j-1})^{-1}( \fM^{\bar{\mu}_j | j-1} ) = \bigsqcup\limits_{\rho \in A(\mu,\bar{\mu}_j) } \wtil{\cM}_{1,(K_0,[\ell])} \times \PP\fM_{0,\rho}^{j-N(\bar{\mu}) - 1}
\]
where $\PP\fM_{0,\rho}^{j-N(\bar{\mu})-1}$ are the proper transforms of $\PP\fM_{0,\rho}^{j-N(\bar{\mu})-2}$ via the blow-up morphism $\pi_{j-N(\bar{\mu})-1}'$.
\item
By \cite[Corollary 3.4]{VZ08}, the induced node-identifying morphism
\[
\iota_{\mu}^j : \wtil{\cM}_{1,(K_0,[\ell])} \times \PP\fM_{0,\mu}^{j-N(\bar{\mu})} \to \fM^{|j-N(\bar{\mu})}
\]
is unramified, where $\PP\fM^{j-N(\bar{\mu})}_{0,\mu}$ is the blow-up of $\PP\fM_{0,\mu}^{j - N(\bar{\mu}) - 1}$ along the smooth locus $\bigsqcup\limits_{\rho \in A(\mu,\bar{\mu}_j) } \PP\fM_{0,\rho}^{j-N(\bar{\mu}) - 1}$.
We denote 
$$\pi'_{j-N(\bar{\mu})} : \PP\fM^{j-N(\bar{\mu})}_{0,\mu} \to \PP\fM^{j-N(\bar{\mu})-1}_{0,\mu}$$ 
the blow-up morphism.

\item
The image of $\iota^j_{\mu_{j+1}}$ is smooth by \cite[Section 4.3 (I8)]{VZ08}.  
Also the image of $\iota_{\mu}^j$ is smooth because it is a blow-up of the image of $\iota_{\mu}^{j-1}$ by \cite[Lemma 3.3]{VZ08}.
Hence the collection $\{ \iota^j_{\mu}, \iota^j_{\mu_{j+1}}, \dots, \iota^j_{\mu_N} \}$ is properly self-intersecting.

\item The normal bundle of $\iota_{\mu}^j$ can be obtained by
\[
N_{\iota_{\mu}^{j}} = (\id \times \pi'_{j-N(\bar{\mu})})^* N_{\iota_{\mu}^{j-1}}.
\]
by \cite[Lemma 3.5]{VZ08}.
\end{itemize}

Let us define $\PP\wtil{\fM}_{0,\mu} := \PP\fM_{0,\mu}^N$ and $\PP\wtil{\fM}_{0,\rho} := \PP\fM_{0,\rho}^N$. Let $\wtil{\pi}' : \PP\wtil{\fM}_{0,\mu} \to \PP\fM_{0,\mu}^0$ be the composition of blow-up morphisms $\pi'_1,\dots,\pi'_{N-N(\bar{\mu})}$. Also we denote $\iota_{\mu}^N$ by $\wtil{\iota}_{\mu}$.
Then we have
\begin{equation}\label{normalbundlefinal}
N_{\wtil{\iota}_{\mu}} \cong \wtil{\iota}_{\mu}^{\; *} N_{\wtil{\fM}^{\bar{\mu}} / \wtil{\fM}^w} \cong  \wtil{\EE}\dual \boxtimes ( \wtil{\pi}')^* \gamma.
\end{equation}

\medskip

By letting 
$$A_1(\mu) := \bigsqcup\limits_{i=1}^{N(\bar{\mu})-1} A(\bar{\mu}_i,\mu), \ \ A_0(\mu) := \bigsqcup\limits_{i=N(\bar{\mu})+1}^{N}A(\mu, \bar{\mu}_i),$$
we can write
\begin{align}\label{normalbundletwisting2}
& \wtil{\iota}_{\mu}^{\, *} \left( N_{\wtil{\fM}^{\bar{\mu}}/ \wtil{\fM}^w }\left( \sum\limits_{  \bar{\mu}' \neq \bar{\mu} } \wtil{\fM}^{\bar{\mu}'} \cap \wtil{\fM}^{\bar{\mu}} \right) \right) \\ \nonumber
& = \wtil{\iota}_{\mu}^{\, *} N_{\wtil{\fM}^{\bar{\mu}}/ \wtil{\fM}^w }  \left( \sum\limits_{\rho \in A_1(\mu)} \wtil{\cM}_{1,\rho} \times \PP\wtil{\fM}_{0,\mu} + \sum\limits_{\rho' \in A_0(\mu)} \wtil{\cM}_{1,(K_0,[\ell])} \times \PP\wtil{\fM}_{0,\rho'}  \right) \\ \nonumber
& =  N_{\wtil{\iota}_\mu }  \left( \sum\limits_{\rho \in A_1(\mu)} \wtil{\cM}_{1,\rho} \times \PP\wtil{\fM}_{0,\mu} + \sum\limits_{\rho' \in A_0(\mu)} \wtil{\cM}_{1,(K_0,[\ell])} \times \PP\wtil{\fM}_{0,\rho'}  \right)  , 
\end{align}
where $\wtil{\cM}_{1,\rho}$ is the proper transform of $D_\rho$ defined in the first and second step of the induction for $\rho \in A_1(\mu)$.
Recall that $\wtil{D}_{\rho}$ is the pull-back of $D_\rho$.
We can prove that $\wtil{D}_{\rho} = \wtil{\cM}_{1,\rho}$.
For $\rho' \in A_0(\mu)$, let $\wtil{D}_{\rho'} := \PP\wtil{\fM}_{0,\rho'}$. 
Therefore, by using \eqref{Remark2}, \eqref{normalbundletwisting2}, \eqref{normalbundlefinal}, and \eqref{hodgetwisting} sequentially we obtain
\[
\wtil{\iota}_{\mu}^{\, *} N_{\wtil{\fM}^{rat}/\wtil{\fM}^w}|_{\wtil{\fM}^{\bar{\mu}}} \cong \EE\dual \boxtimes \wtil{\EE'}
\]
where $\wtil{\EE'} := (\wtil{\pi}')^* \gamma \left( \sum\limits_{\rho' \in A_0(\mu)} \wtil{D}_{\rho'} \right)$.

\subsection{Connection to perfect obstruction theory}\label{Relperf}
Let $\wtil{\cM}^{rat} := \cup_{\bar{\mu}} \wtil{\cM}^{\bar{\mu}}$ and 
$$\LL_{\bar{\mu}} := N_{\wtil{\fM}^{\bar{\mu}}/\wtil{\fM}}|_{\wtil{\cM}^{\bar{\mu}}}, \ \ \LL_{rat} := N_{\wtil{\fM}^{rat}/\wtil{\fM}}|_{\wtil{\cM}^{rat}}.$$ 
By \eqref{Remark2}, we have 
$$
\LL_{rat}|_{\wtil{\cM}^{\bar{\mu}}} = \LL_{\bar{\mu}}\left( \sum\limits_{\bar{\mu}' \in \fS, \bar{\mu}'\neq \bar{\mu}} \wtil{\cM}^{\bar{\mu}'} \cap \wtil{\cM}^{\bar{\mu}} \right).
$$
In Corollary \ref{obstructionmorphism2}, we obtain a morphism
\begin{equation}\label{eq:inducedmorphism1}
\LL_{rat} \to H^1(E_{\wtil{\cM}/\wtil{\fM}^w} \dual|_{\wtil{\cM}^{rat}})
\end{equation}
whose zero locus is $\wtil{\cM}^{\bar{\mu}} \cap \wtil{\cM}^{red}$. 
Note that we have 
\begin{align*}
\wtil{\iota}_{\mu, \PP^n}^{\; *}H^1(E_{\wtil{\cM}/\wtil{\fM}^w} \dual|_{\wtil{\cM}^{\bar{\mu}}}) \cong \EE\dual \boxtimes ev_{\bullet}^* T_{\PP^n}.
\end{align*}
The morphism  
\begin{equation}\label{eq:inducedmorphism2}
\wtil{\iota}_{\mu, \PP^n}^{\; *} \left( \LL_{rat}|_{\wtil{\cM}^{\bar{\mu}}} \right) \cong \EE\dual \boxtimes q_{\mu}^* \wtil{\EE'} \to \wtil{\iota}_{\mu, \PP^n}^{\; *} H^1(E_{\wtil{\cM}/\wtil{\fM}^w} \dual|_{\wtil{\cM}^{\bar{\mu}}})
\end{equation}
induced by \eqref{eq:inducedmorphism1} coincides with the morphism of vector bundles $id_{\EE\dual} \boxtimes \wtil{\cD}_0$ in \cite[p. 1236]{Zin08standard} after a certain perturbation of the moduli space $\wtil{\cM}_{1,(K_0,[\ell])} \times \wtil{M}_{0,\mu}(\PP^n,d)$. The perturbed spaces are the virtual fundamental classes in symplectic geometry.
Here is the explanation of notations above. 
\begin{itemize}
\item
$\wtil{M}_{0,\mu}(\PP^n,d) := \bar{M}_{0,\mu}(\PP^n,d) \times_{\fM_{0,\mu}} \PP \wtil{\fM}_{0,\mu}$ where $\bar{M}_{0,\mu}(\PP^n,d)$ is defined similar to $\bar{M}_{0,\mu}(Q,d)$.
\item
$\wtil{\iota}_{\mu, \PP^n}$ : The node-identifying morphism $\wtil{\cM}_{1,(K_0,[\ell])} \times \wtil{M}_{0,\mu}(\PP^n,d) \to \wtil{\cM}^{\bar{\mu}}$.
\item
$q_{\mu}$ : The forgetful morphism $\wtil{M}_{0,\mu}(\PP^n,d) \to \PP \wtil{\fM}_{0,\mu}$.
\end{itemize}
In \cite{Zin08standard}, Zinger proved the morphism $id_{\EE\dual} \boxtimes \wtil{\cD}_0$ is injective whereas the morphism \eqref{eq:inducedmorphism2} has zero locus $(\wtil{\iota}_{\mu, \PP^n})^{-1}(\wtil{\cM}^{red} \cap \wtil{\cM}^{\bar{\mu}})$.

\section{Proof of Theorem \ref{main}} \label{Sect:Pf}

\subsection{Chern class expression of $A_{k,d}^{\bar{\mu}}$}\label{Sect:Chern}

We begin this section with introducing some vector bundles on $\wtil{\cM}^{\bar{\mu}}$ (caution : this is not $\wtil{\cM}^{p, \bar{\mu}}$).
Let
\begin{align*}
&V_1 := \RR^1\pi_{*} ev^* \cO_{\PP^n}(1)^{\oplus(n+1)}, \ \
V_2 := \RR^1\pi_{*}\left(\bigoplus\limits_{i=1}^m ev^* \cO_{\PP^n}(-\deg f_i) \otimes \omega_{\pi}\right),\\
&V:=V_1\oplus V_2, \ \ N^{\bar{\mu}}:=\pi_*\left(\bigoplus\limits_{i=1}^m ev^* \cO_{\PP^n}(-\deg f_i) \otimes \omega_{\pi}\right).
\end{align*}
Note that $\rank V_1 = n+1$ and $\rank V_2 = d(\sum_i \deg f_i) +m$.
We further note that $\wtil{\cM}^{p, \bar{\mu}}$ is isomorphic to the total space of $N^{\bar{\mu}}$.

Let $V_1^p$, $V^p_2$, and $V^p$ be the pull-back vector bundles on $\wtil{\cM}^{p, \bar{\mu}}$. 
Let
$\sigma_1 :  V_{1}^{p}  \to \cO_{\wtil{\cM}^{p,\bar{\mu}}}$ and $\sigma_2 :  V_{2}^{p}  \to \cO_{\wtil{\cM}^{p,\bar{\mu}}}$ be cosections defined by
\begin{align} \label{Cosection:Def}
& \sigma_1 : (u_0',\dots,u_n')  \mapsto \sum\limits_{i=1}^{m} p_i \sum\limits_{j=0}^{n} \frac{\rou f_i}{\rou u_j}(u_0,\dots,u_n)u_j' , \\ \nonumber
& \sigma_2 : (p_1',\dots,p_m')  \mapsto \sum\limits_{i=1}^{m} p_i'f_i(u_0, \dots, u_n),
\end{align}
and $\sigma := \sigma_1 \oplus \sigma_2$. 
Then the precise statement of Definition \ref{Def:CycleDecomp} can be written with the cosection $\sigma$, for instance 
\begin{align} \label{Amu:Simple}
A^{\bar{\mu}}_{k,d} = (-1)^{d\sum_i \deg f_i } (b_Q)_* 0^!_{H^1( E_{\wtil{\cM}^p/\wtil{\fB} }^\vee|_{\wtil{\cM}^{p, \bar{\mu}}} )  , \sigma} [C_{\wtil{\cM}^p/\wtil{\fB}}|_{\wtil{\cM}^{p, \bar{\mu}}}].
\end{align}
In this section, we will compute $A_{k,d}^{\bar{\mu}}$ in terms of Chern classes of vector bundles; see \eqref{virtcyclecomp}.
To do so, we first introduce some notations
\begin{itemize}
\item
$\PP_{\bar{\mu}} := \PP(N^{\bar{\mu}} \oplus \cO_{\wtil{\cM}^{\bar{\mu}}})$ be a completion of $\wtil{\cM}^{p, \bar{\mu}}$.
\item
$\bar{\gamma} : \PP_{\bar{\mu}} \to \wtil{\cM}^{\bar{\mu}}$ be the projection morphism.
\item
Let $D_{\infty} := \PP(N^{\bar{\mu}} \oplus 0) \subset \PP_{\bar{\mu}}$ be the divisor at infinity.
\item 
$\bar{V}^p_1 := \bar{\gamma}^*V_1(-D_{\infty})$, $\bar{V}^p_2:= \bar{\gamma}^*V_2$, $\bar{V}^p := \bar{V}^p_1\oplus \bar{V}^p_2$. 
\item 
$\bar{\sigma}_1$ and $\bar{\sigma}_2$ be induced cosections by $\sigma_1$ and $\sigma_2$ respectively. 
\item 
$\bar{\sigma} := \bar{\sigma}_1 \oplus \bar{\sigma}_2$.
\end{itemize}

\medskip

For simplicity, we denote $C_{\wtil{\cM}^p/\wtil{\fB}}|_{\wtil{\cM}^{p,\bar{\mu}}}$ by $C^{p,\bar{\mu}}$. 
Let $C^{p,\bar{\mu}}_b := C^{p,\bar{\mu}} \cap |0 \oplus V^p_2|$. 
By \cite[Proposition 5.1]{LO18}, we have
$$C^{p,\bar{\mu}}_b = C^{p,\bar{\mu}} \cap |0 \oplus V^p_2| \subset \wtil{\cM}^{p,\bar{\mu}}\cup |\gamma^*F|$$ 
where $\gamma : \wtil{\cM}^p \to \wtil{\cM}$ is the projection morphism, and $F$ is a rank $m$ subbundle $F \subset V_2|_{\Delta_{\bar{\mu}}}$, $\Delta_{\bar{\mu}} = \wtil{\cM}^{red} \cap \wtil{\cM}^{\bar{\mu}}$.
Let $R^{p,\bar{\mu}} := C_{C^{p,\bar{\mu}}_b / C^{p,\bar{\mu}}}$ be the normal cone to $C^{p,\bar{\mu}}_b$ in $C^{p,\bar{\mu}}$. 
Hence, we have $[C^{p,\bar{\mu}}] = [R^{p,\bar{\mu}}]$ in $A_*(V^p(\sigma))$. 
By \cite[Proposition 5.3]{LO18}, we obtain
\[
\Gysin{V^p,\sigma}[C^{p,\bar{\mu}}] = \Gysin{V^p,\sigma}[R^{p,\bar{\mu}}] = \bar{\gamma}_* \Gysin{\bar{V}^p_2,\bar{\sigma}_2} \cdot \Gysin{\bar{V}^p_1}[\bar{R^{p,\bar{\mu}}}].
\]
Here, $\Gysin{\bar{V}^p_1}$ is considered as a bivariant class to make sense of a homomorphism $\Gysin{\bar{V}^p_1} : A_*(\bar{V}^p(\bar{\sigma}) ) \to A_*( \bar{V}^p_2(\bar{\sigma}_2) )$ and $\bar{R^{p,\bar{\mu}}} $ is the closure of $R^{p,\bar{\mu}}$ in $\bar{V}^p$.

Let $B^{\bar{\mu}} := \Gysin{\bar{V}^p_1}[\bar{R^{\bar{p,\mu}}}]$.
Since $\bar{R^{p,\bar{\mu}}}$ is contained in $\PP^{\bar{\mu}} \cup |\bar{\gamma}^*F |$, we may regard
$B^{\bar{\mu}}$ as an element in the image of $A_*(\PP^{\bar{\mu}} \cup |\bar{\gamma}^*F|) \to A_*(\bar{V}^p_2(\bar{\sigma}_2))$. 
Then we can decompose $B^{\bar{\mu}}$ into $B^{\bar{\mu}} = B^{\bar{\mu}}_1 + B^{\bar{\mu}}_2$ such that $B^{\bar{\mu}}_1 \in A_*(\PP^{\bar{\mu}})$ and $B^{\bar{\mu}}_2 \in A_*(|\bar{\gamma}^*F|)$.
We can show that $\bar{\gamma}_* \Gysin{\bar{V}^p_2,\bar{\sigma}_2} B^{\bar{\mu}}_2 = 0$ by a dimension reason; see \cite[Section 5.0.1]{LO18}. 
Hence we have
\begin{align}\label{Gysineq1}
\Gysin{V^p,\sigma}[C^{p,\bar{\mu}}] = \bar{\gamma}_* \Gysin{\bar{V}^p_2,\bar{\sigma}_2} ( B^{\bar{\mu}}_1 ).
\end{align}

\smallskip

Let $C^{p,\bar{\mu}}_0 := C_{\wtil{\cM}^p/\wtil{\fM}^w}|_{\wtil{\cM}^{p,\bar{\mu}}}$. (Caution: $C^{p,\bar{\mu}}$ was $C_{\wtil{\cM}^p/\wtil{\fB}}|_{\wtil{\cM}^{p,\bar{\mu}}}$.)
Consider the gluing of \eqref{SESofH1:2}
\begin{align*} 
0 \to R^1\pi_*\cO_{\cC_{\wtil{\cM}^{p,\bar{\mu}}} } \stackrel{\alpha}{\lra} H^1( E_{\wtil{\cM}^p/\wtil{\fB} }^\vee|_{\wtil{\cM}^{p, \bar{\mu}}} ) \to H^1( E_{\wtil{\cM}^p/\wtil{\fM}^w }^\vee|_{\wtil{\cM}^{p, \bar{\mu}}} ) \to 0.
\end{align*}
Then the morphism $\alpha$ factors through 
\begin{align*}
\xymatrix{
& & R^1\pi_*\cO_{\cC_{\wtil{\cM}^{p,\bar{\mu}}} } \ar[ld]_-\alpha \ar[d]^-\alpha && \\
0 \ar[r] & V_1^p \ar[r] & H^1( E_{\wtil{\cM}^p/\wtil{\fB} }^\vee|_{\wtil{\cM}^{p, \bar{\mu}}} ) \ar[r] & V_2^p \ar[r] & 0.
}
\end{align*}
The quotient of $\alpha: R^1\pi_*\cO_{\cC_{\wtil{\cM}^{p,\bar{\mu}}} } \to V_1^p$ is $\gamma^*H^1( E_{\wtil{\cM}/\wtil{\fM}^w }^\vee|_{\wtil{\cM}^{\bar{\mu}}} )$. Let $V_1' := H^1( E_{\wtil{\cM}/\wtil{\fM}^w }^\vee|_{\wtil{\cM}^{\bar{\mu}}} )$. Then we can write $H^1( E_{\wtil{\cM}^p/\wtil{\fM}^w }^\vee|_{\wtil{\cM}^{p, \bar{\mu}}} ) = \gamma^* V_1' \oplus V^p_2$.
Note that $C^{p,\bar{\mu}}_0 \subset \gamma^* V_1' \oplus V_2^p$.

Let $(C^{p,\bar{\mu}}_0)_b := C^{p,\bar{\mu}}_0 \cap |0\oplus V^p_2|$ and $R^{p, \bar{\mu}}_0 := C_{(C^{p,\bar{\mu}}_0)_b/ C^{p,\bar{\mu}}_0}$.
The glueing of \eqref{SESofH1:2} gives rise to the exact sequence of abelian cones
$$
0 \to R^1\pi_*\cO_{\cC_{\wtil{\cM}^{p,\bar{\mu}}} } \to C^{p,\bar{\mu}} \to C^{p,\bar{\mu}}_0 \to 0,
$$
and in the middle term, we have $R^1\pi_*\cO_{\cC_{\wtil{\cM}^{p,\bar{\mu}}} } \cap C^{p,\bar{\mu}}_b = 0$ since $\alpha$ factors through $V_1^p$. 
Thus we have the following exact sequence of cones
\begin{align*}
\xymatrix@R=0pt@C=10pt{
 0 \ar[r] & R^1\pi_*\cO_{\cC_{\wtil{\cM}^{p,\bar{\mu}}} } \ar[r] & R^{p, \bar{\mu}} \ar[r] & R^{p, \bar{\mu}}_0 \ar[r] & 0, \\
 0 \ar[r] & R^1\pi_*\cO_{\cC_{\PP^{\bar{\mu}}} } \ar[r] & \bar{ R^{p, \bar{\mu}}} \ar[r] & \bar{ R^{p, \bar{\mu}}_0} \ar[r] & 0,
}
\end{align*}
where $\bar{ R^{p, \bar{\mu}}_0}$ is the closure of $R^{p, \bar{\mu}}_0$ in $\bar{\gamma}^*V'_1(-D_\infty) \oplus \bar{V}_2^p$.
Thus, we have 
\begin{align} \label{Gysineq2}
& B^{\bar{\mu}}_1 = \Gysin{\bar{V}^p_1}[\bar{R^{p, \bar{\mu}}|_{\wtil{\cM}^{p,\bar{\mu}}} }]  = \Gysin{ \bar{\gamma}^*V_1'(-D_{\infty})}[\bar{R^{p, \bar{\mu}}_0|_{\wtil{\cM}^{p,\bar{\mu}}} }] .
\end{align}

Now we study $R^{p, \bar{\mu}}_0$ by using a local computation.
We use the notations in Section \ref{Decomp:Coord}.
The normal cone $C^{p,\bar{\mu}}_0$ was locally represented by 
\begin{align*}
C^{p,\bar{\mu}}_0 &  \stackrel{loc}{\cong}  C_{Y/X\times \CC^{n+m} }|_{Y^i} \\
& \cong  \Spec\left(  \frac{ R/(t_i)[y_1\dots,y_{n+m}][x_1,\dots,x_{n+m}] }{(y_ix_j-y_jx_i)_{1\leq i<j \leq n+m} } \right).
\end{align*}
Then, the local equation of $(C^{p,\bar{\mu}}_0)_b$ is $\{ x_1=\dots = x_n=0 \}$.
Let 
$$\wtil{R} := \frac{ R/(t_i)[y_1\dots,y_{n+m}][x_1,\dots,x_{n+m}] }{(x_1,\dots,x_n, y_ix_j-y_jx_i)_{1\leq i<j \leq n+m} }$$
be the (local) coordinate ring of $(C^{p,\bar{\mu}}_0)_b$.
The normal cone $C_{(C^{p,\bar{\mu}}_0)_b/C^{p,\bar{\mu}}_0}$ is then
\begin{align*}
& C_{(C^{p,\bar{\mu}}_0)_b/C^{p,\bar{\mu}}_0}  \stackrel{loc}{\cong} \frac{\wtil{R}[z_1,\dots,z_n]}{(y_iz_j-y_jz_i)_{1\leq i\leq j \leq n}}.
\end{align*}
Recall that $(C^{p,\bar{\mu}}_0)_b \subset \wtil{\cM}^{p,\bar{\mu}} \cup \gamma^* F$. 
Since the local equation of $\wtil{\cM}^{p,\bar{\mu}}$ is $\{ x_1 = \dots = x_{n+m} = 0 \}$, we have
\begin{align*}
 C_{(C^{p,\bar{\mu}}_0)_b/C^{p,\bar{\mu}}_0}|_{\wtil{\cM}^{p,\bar{\mu}}} 
& \stackrel{loc}{\cong} \frac{\wtil{R}/(x_{n+1},\dots,x_{n+m})[z_1,\dots,z_n]}{(y_iz_j-y_jz_i)_{1\leq i\leq j \leq n}} \\
& \cong \frac{R/(t_i)[y_1,\dots,y_{n+m}][z_1,\dots,z_n]}{(y_iz_j-y_jz_i)_{1\leq i \leq j \leq n} }.
\end{align*}
From the above local computation, we observe that $R^{p, \bar{\mu}}_0|_{\wtil{\cM}^{p,\bar{\mu}}} = \gamma^* C^{\bar{\mu}}_0$ where
$C^{\bar{\mu}}_0 \subset V_1'$ is the coarse moduli space of the cone stack $\fC_{\wtil{\cM}/\wtil{\fM}^w} |_{\wtil{\cM}^{\bar{\mu}}} \subset \bdst{\EE_{\wtil{\cM}/\wtil{\fM}^w} \dual|_{\wtil{\cM}^{\bar{\mu}}}}$
which is irreducible.
Thus, we have 
\begin{align} \label{Gysineq3}
\Gysin{ \bar{\gamma}^*V_1'(-D_{\infty})}[\bar{R^{p, \bar{\mu}}_0|_{\wtil{\cM}^{p,\bar{\mu}}} }] = \Gysin{\bar{V'}^p_1}[\bar{\gamma^* C^{\bar{\mu}}_0}].
\end{align}

\medskip

Recall that $\Delta_{\bar{\mu}} = \wtil{\cM}^{red} \cap \wtil{\cM}^{\bar{\mu}}$.
Let $\Delta^p_{\bar{\mu}} := \Delta_{\bar{\mu}} \times_{\wtil{\cM}^{\bar{\mu}}} \wtil{\cM}^{p,\bar{\mu}} $. Let 
$$q : \what{\cM}^{\bar{\mu}}:=\Bl_{\Delta_{\bar{\mu}}}\wtil{\cM}^{\bar{\mu}} \to \wtil{\cM}^{\bar{\mu}}$$ 
be the blow-up morphism.
By abuse of notation, we denote the induced morphism $\what{\cM}^{\bar{\mu}} \times_{\wtil{\cM}^{\bar{\mu}} } \wtil{\cM}^{p,\bar{\mu}} \to \wtil{\cM}^{p,\bar{\mu}}$ by $q$ and let $D' \subset \what{\cM}^{\bar{\mu}} \times_{\wtil{\cM}^{\bar{\mu}} } \wtil{\cM}^{p,\bar{\mu}}$ be the exceptional divisor. 
By Corollary \ref{obstructionmorphism2}, we obtain an injective morphism of vector bundles 
$$i :  q^*\wtil{\LL}_{\bar{\mu}} (D') \hra q^*H^1(E_{\wtil{\cM}/ \fM^w}\dual|_{\wtil{\cM}^{\bar{\mu}}}) = q^*V_1'$$ 
where $\wtil{\LL}_{\bar{\mu}} := \LL_{rat}|_{\wtil{\cM}^{\bar{\mu}}}$. 
Moreover, $C^{\bar{\mu}}_0$ is equal to $q(\image i)$ by \eqref{Remark1}. 
Consider the diagram
\[
\xymatrix{
\what{\PP}_{\bar{\mu}}:= \PP_{\bar{\mu}} \times_{\wtil{\cM}^{\bar{\mu}} } \what{\cM}^{\bar{\mu}} \ar[rr]^-{q} \ar[dd]^-{\bar{\gamma}} & & \PP_{\bar{\mu}} \ar[dd]^(0.3){\bar{\gamma}} & \\
& \what{\cM}^{p,\bar{\mu}}:=\wtil{\cM}^{p,\bar{\mu}} \times_{\wtil{\cM}^{\bar{\mu}} } \what{\cM}^{\bar{\mu}} \ar[rr]^(0.5)q \ar[dl]_{\gamma} \ar@{^(->}[lu]_-{\mathrm{open}} & & \wtil{\cM}^{p,\bar{\mu}} \ar@{^(->}[ul]_-{\mathrm{open}} \ar[dl]_{\gamma} \\
\what{\cM}^{\bar{\mu}} \ar[rr]^-q & & \wtil{\cM}^{\bar{\mu}} &
.}
\]
By abuse of notation, we used notations $q$, $\gamma$, and $\bar{\gamma}$ for pull-backs.
Then we observe
\[
\bar{\gamma^* C^{\bar{\mu}}_0} = q( \image j )
\]
where $j$ is the injective morphism induced by $i$
\[
j : \bar{ \gamma^* (  q^*  \wtil{\LL}_{\bar{\mu}} (D') ) } =\bar{\gamma}^* (  q^*  \wtil{\LL}_{\bar{\mu}} (D'))(-q^*D_{\infty}) \hra \bar{\gamma}^* q^* V_1'(-q^* D_{\infty}) = q^* \bar{V_1'}.
\]
Hence we have
\begin{align} \label{Gysineq4}
[\bar{\gamma^*C^{\bar{\mu}}_0}] = q_* [ \bar{\gamma}^* (  q^*  \wtil{\LL}_{\bar{\mu}} (D'))(-q^*D_{\infty}) ].
\end{align}
From \eqref{Gysineq2}, \eqref{Gysineq3} and \eqref{Gysineq4}, we have 
\begin{align*}
B_1^{\bar{\mu}} & = \Gysin{\bar{\gamma}^*(V_1')(-D_{\infty})}[\bar{\gamma^* C^{\bar{\mu}}_0 }] \\
& = \Gysin{\bar{\gamma}^*(V_1')(-D_{\infty})}q_* [ \bar{\gamma}^* (  q^*  \wtil{\LL}_{\bar{\mu}} (D'))(-q^*D_{\infty}) ] \\
& = q_* c_{top} \left( \bar{\gamma}^* q^* V_1' (-q^* D_{\infty}) / \bar{\gamma}^*( q^*\wtil{\LL}_{\bar{\mu}}(D') )( -q^* D_{\infty}) \right)
\end{align*}
where $c_{top}(-)$ stands for the top Chern class.
Thus we have
\begin{align} \label{Eq:B1}
\bar{\gamma}_* B^{\bar{\mu}}_1 & = \bar{\gamma}_* q_* c_{top} \left( \bar{\gamma}^* q^* V_1' (-q^* D_{\infty}) / \bar{\gamma}^*( q^*\wtil{\LL}_{\bar{\mu}}(D') )( -q^* D_{\infty}) \right) \\ \nonumber
& = q_* \left( c\left( q^*V_1' / \bar{\gamma}^*( q^*\wtil{\LL}_{\bar{\mu}}(D') ) \right) s( (q^* N^{\bar{\mu}} ) \dual) \right)_{\rank V_1' - m - 1 }
\end{align}
by \cite[Example 3.2.2]{Ful98}.
Here $c(-)$ and $s(-)$ denote the Chern and Segre classes respectively, and $(-)_i$ indicates the degree $i$ component.
For more specific computation, we need the following lemma with a general situation.

\begin{lemm}\label{excpushforward}
Let $Y$ be a smooth variety and let $X\subset Y$ be a smooth subvariety of codimension $r$. Let $\pi : \wtil{Y}=Bl_X Y \to Y$ be the blow-up morphism and $D$ be the exceptional divisor.
Let $E$ and $L$ be a vector bundle and a line bundle on $Y$ respectively. 
Assume that a line bundle $\pi^*L(D)$ is embedded in $ E$ as a subbundle.
Then we have 
\[
\pi_* c_{r-1}(\pi^*E/\pi^*L(D)) = \left( \frac{c(E)}{c(L)} \right)_{r-1}.
\]
\end{lemm}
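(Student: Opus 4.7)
The plan is to reduce the statement to pure Chern-class algebra on $\wtil Y$ using the given embedding. The hypothesis produces a short exact sequence
\[
0 \to \pi^*L(D) \to \pi^*E \to Q \to 0, \qquad Q := \pi^*E/\pi^*L(D),
\]
so that, writing $\ell := c_1(L)$ and $e := c_1(\cO_{\wtil Y}(D))$, multiplicativity of total Chern classes gives
\[
c_{r-1}(Q) = \Bigl[\pi^*c(E)\cdot\bigl(1 + \pi^*\ell + e\bigr)^{-1}\Bigr]_{r-1} = \sum_{a+b=r-1}(-1)^a \pi^*c_b(E)\cdot(\pi^*\ell + e)^a.
\]

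Applying $\pi_*$ and the projection formula, the task reduces to evaluating $\pi_*(\pi^*\ell + e)^a$ for $0 \le a \le r-1$. Binomially expanding and using $\pi_*(\pi^*\alpha\cdot e^j) = \alpha\cdot\pi_*(e^j)$, the essential input is the classical blow-up pushforward formula
\[
\pi_*(e^0) = [Y], \qquad \pi_*(e^j) = 0 \ \text{for } 1 \le j \le r-1,
\]
which follows from $e^j = \iota_{D*}(e|_D)^{j-1}$, the identification $D \cong \PP(N_{X/Y})$ with $e|_D = -\xi$ for $\xi = c_1(\cO_D(1))$, and the Segre formula $q_*\xi^{j-1} = s_{j-r}(N_{X/Y})$, which vanishes for $j < r$. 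Consequently $\pi_*(\pi^*\ell + e)^a = \ell^a$ for every $a \le r-1$.

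Substituting back,
\[
\pi_* c_{r-1}(Q) = \sum_{a+b = r-1}(-1)^a c_b(E)\ell^a = \bigl(c(E)\cdot(1+\ell)^{-1}\bigr)_{r-1} = \Bigl(\frac{c(E)}{c(L)}\Bigr)_{r-1},
\]
which is the claimed identity. Nothing here is genuinely hard; the only substantive ingredient is the blow-up pushforward formula above, and once that is in place the argument is a short manipulation of Chern polynomials. The main point to keep track of is the truncation: it is precisely because we are after the degree $r-1$ component that all the potentially nonvanishing contributions $\pi_*(e^j)$ with $j \ge r$ fall outside the range of our sum, so that the blow-up does not see $X$ at all in the relevant degree and the pushforward is given by the naive formal identity $(c(E)/c(L))_{r-1}$ on $Y$.
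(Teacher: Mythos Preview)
Your proof is correct and follows essentially the same approach as the paper's: both expand $c_{r-1}(Q)$ via the Whitney formula as a truncated geometric series in $\pi^*c_1(L)+[D]$, and both eliminate all terms involving a positive power of $[D]$ by the Segre-class computation $\pi_*[D]^j = (-1)^{j-1}s_{j-r}(N_{X/Y})\cap[X] = 0$ for $1\le j\le r-1$. Your write-up is slightly more streamlined in isolating the key identity $\pi_*(\pi^*\ell+e)^a=\ell^a$ for $a\le r-1$, but the argument is the same.
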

\begin{proof}
We have
\begin{align*}
c_{r-1}(\pi^*E/\pi^*L(D)) & = \left( \frac{c(\pi^*E)}{c(\pi^*L(D))} \right)_{r-1} \\
& = \left( \frac{c(\pi^*E)}{1 + \pi_*c_1(L) + D} \right)_{r-1} \\
& = \left( c(\pi^*E)\left( \sum_{i=0}^\infty (-1)^i (\pi^*c_1(L)+D)^i\right) \right)_{r-1}.
\end{align*}
Note that $D \cong \PP(N_{X/Y})$. For every $1 \leq l \leq k \leq r-1$, we have
\begin{align*}
 \pi_* \left( c_{r-1-k}(\pi^*E)\cdot D^{l} \right) & = c_{r-1-k}(E) \cdot (\pi_*D^{l})  \\
& = c_{r-1-k}(E) \cdot \pi_*( c_1(\cO_{\PP(N_{X/Y})}(-1))^{l-1} \cdot D) \\
& = (-1)^{l-1} \cdot c_{r-1-k}(E) \cdot s_{l-r}(N_{X/Y}) [X] = 0.
\end{align*}
Hence we have
\begin{align*}
&\pi_* \left( c(\pi^*E)\left( \sum_{i=0}^\infty (-1)^i (\pi^*c_1(L)+D)^i\right) \right)_{r-1} \\ 
& = \pi_* \left( c(\pi^*E)\left( \sum_{i=0}^\infty (-1)^i (\pi^*c_1(L))^i\right) \right)_{r-1} = \left( \frac{c(E)}{c(L)} \right)_{r-1}.
\end{align*}
\end{proof}

By \eqref{Gysineq1}, \eqref{Eq:B1} and Lemma \ref{excpushforward}, we have
\begin{align}\label{virtcyclecomp}
\Gysin{V^p,\sigma}[C^{p,\bar{\mu}}] & = \bar{\gamma}_* \Gysin{\bar{V}^p_2,\bar{\sigma}_2} ( B^{\bar{\mu}}_1 ) \\ \nonumber
& = \Gysin{V_2,\xi_2}\bar{\gamma}_* ( B^{\bar{\mu}}_1 ) \\ \nonumber
& = \Gysin{V_2,\xi_2}[\wtil{\cM}^{\bar{\mu}}] \left( \frac{c(V_1')s((N^{\bar{\mu}})\dual) }{c(\wtil{\LL}_{\bar{\mu}}) } \right)_{\rank V_1' - m - 1}
\end{align}
where the cosection $\xi_2 : V_{2} \to \cO_{\wtil{\cM}^{\bar{\mu}}}$ is defined by
\begin{align*}
& (\dot{p_1},\dots,\dot{p_m}) \mapsto \sum\limits_{i=1}^{m} \dot{p_i}f_i(u_0, \dots, u_n), \ u_0,\dots,u_n \in \Gamma(\cC_{\wtil{\cM}^{\bar{\mu}}}, ev^* \cO_{\PP^n}(1)).
\end{align*}
By \eqref{Amu:Simple} and \eqref{virtcyclecomp}, we have
\begin{align}\label{chernclassexpress1}
A^{\bar{\mu}}_{k,d} = (-1)^{d\sum_i \deg f_i } (b_Q)_* \left( \Gysin{V_2,\xi_2}[\wtil{\cM}^{\bar{\mu}}] \left( \frac{c(V_1')s((N^{\bar{\mu}})\dual) }{c(\wtil{\LL}_{\bar{\mu}}) } \right)_{\rank V_1' - m - 1} \right).
\end{align}

\subsection{Condition (2) of Theorem \ref{main} : Computation of $A^{\bar{\mu}}_{k,d}$ in terms of Chern classes of vector bundles and Zinger's formula} \label{ChernClass}

Let
\begin{itemize}
\item
$\wtil{M}_{0,\mu}(Q,d) : = \bar{M}_{0,\mu}(Q,d) \times_{\fM_{0,\mu} } \PP\wtil{\fM}_{0,\mu}$.
\item
$\wtil{\cM}^{\bar{\mu}}(Q) := \wtil{\cM}^{\bar{\mu}} \times_{\bar{M}_{1,k}(\PP^n,d) } \bar{M}_{1,k}(Q,d)$.
\end{itemize}
The node-identifying morphism $\wtil{\iota}_{\mu}$ lifts to the following morphisms
\begin{align*}
& \wtil{\iota}_{\mu,\PP^n} : \wtil{\cM}_{1,(K_0,[\ell])} \times \wtil{M}_{0,\mu}(\PP^n,d) \to \wtil{\cM}^{\bar{\mu}}, \\
& \wtil{\iota}_{\mu,Q} : \wtil{\cM}_{1,(K_0,[\ell])} \times \wtil{M}_{0,\mu}(Q,d) \to \wtil{\cM}^{\bar{\mu}}(Q).
\end{align*}
We define a subgroup $S(\mu)$ of the symmetric group $S_{\ell}$ by
\begin{align*}
S(\mu) := & \{ f \in S_{\ell} \ | \ d_i = d_{f(i)} \textrm{ and } |K_i|=|K_{f(i)}| \textrm{ for all } i \in [\ell] \}.
\end{align*}
Note that $S(\mu)$ acts on $\wtil{\cM}_{1,(K_0,[\ell])} \times \wtil{M}_{0,\mu}(\PP^n,d)$ (resp. $\wtil{\cM}_{1,(K_0,[\ell])} \times \wtil{M}_{0,\mu}(Q,d)$) by permuting marked points on $\wtil{\cM}_{1,(K_0,[\ell])}$ and components in $\wtil{M}_{0,\mu}(\PP^n,d)$ (resp. $\wtil{M}_{0,\mu}(Q,d)$). Then $\wtil{\iota}_{\mu,Q}$ and $\wtil{\iota}_{\mu,\PP^n}$ send an $S(\mu)$-orbit to a point.
It implies that 
$$\deg(\wtil{\iota}_{\mu,\PP^n}) = \deg(\wtil{\iota}_{\mu,Q}) =|S(\mu)|.$$
Let us define 
\begin{itemize}
\item
$\wtil{M}_{0,\bar{\mu}}(\PP^n,d)  := \bigsqcup\limits_{\mu' \in \fS, \;  \bar{\mu'} = \bar{\mu} } \wtil{M}_{0,\mu'}(\PP^n,d)$. 
\item
$\wtil{M}_{0,\bar{\mu}}(Q,d)  := \bigsqcup\limits_{\mu' \in \fS, \;  \bar{\mu'} = \bar{\mu}} \wtil{M}_{0,\mu'}(Q,d).$ 
\item
$\PP\wtil{\fM}_{0,\bar{\mu}}  := \bigsqcup\limits_{\mu' \in \fS, \;  \bar{\mu'} = \bar{\mu}} \PP\wtil{\fM}_{0,\mu'}.$
\end{itemize}
Note that $|\{ \mu' \in \fS \ | \ \bar{\mu'} = \bar{\mu} \}| = \ell!/|S(\mu)|$.
Thus the induced node-identifying morphisms
\begin{align*}
\wtil{\iota}_{\bar{\mu},\PP^n} : \wtil{\cM}_{1,(K_0,[\ell])} \times \wtil{M}_{0,\bar{\mu}}(\PP^n,d) \to \wtil{\cM}^{\bar{\mu}} \\
\wtil{\iota}_{\bar{\mu},Q} : \wtil{\cM}_{1,(K_0,[\ell])} \times \wtil{M}_{0,\bar{\mu}}(Q,d) \to \wtil{\cM}^{\bar{\mu}}(Q)
\end{align*}
have degree $\ell !$. Hence we have
\begin{align}\label{Gysinpushforward1}
\ell! \cdot \Gysin{V_2,\xi_2}[\wtil{\cM}^{\bar{\mu}}] = (\wtil{\iota}_{\mu,Q})_* \Gysin{ (\wtil{\iota}_{\mu,\PP^n})^*V_2, (\wtil{\iota}_{\mu,\PP^n})^*\xi_2} \left( [\wtil{\cM}_{1,(K_0,[\ell])}] \times [\wtil{M}_{0,\bar{\mu}}(\PP^n,d)] \right).
\end{align}
by the bivariant property of localized Gysin map.
Therefore we have
\begin{align}\label{chernclassexpression3}
\nonumber
A^{\bar{\mu}}_{k,d} & = (-1)^{d\sum_i \deg f_i } (b_Q)_* \left( \Gysin{V_2,\xi_2}[\wtil{\cM}^{\bar{\mu}}]  \frac{c(V_1')s((N^{\bar{\mu}})\dual) }{c(\wtil{\LL}_{\bar{\mu}}) } \right)_{\rank V_1' - m - 1}  \\ \nonumber
& = \frac{(-1)^{d\sum_i \deg f_i }}{\ell!}(b_Q)_*(\wtil{\iota}_{\mu,Q})_* \Gysin{ (\wtil{\iota}_{\mu,\PP^n})^*V_2, (\wtil{\iota}_{\mu,\PP^n})^*\xi_2} \left( [\wtil{\cM}_{1,(K_0,[\ell])}] \times [ \wtil{M}_{0,\bar{\mu}}(\PP^n,d)] \right) \\ \nonumber
& \left( \frac{c((\wtil{\iota}_{\mu,Q})^*V_1')s((\wtil{\iota}_{\mu,Q})^*(N^{\bar{\mu}})\dual) }{c((\wtil{\iota}_{\mu,Q})^*\wtil{\LL}_{\bar{\mu}}) } \right)_{\rank V_1' - m - 1} \\ \nonumber
& = \frac{ 1 }{\ell!}(b_Q)_* \left( [\wtil{\cM}_{1,(K_0,[\ell])}] \times [ \wtil{M}_{0,\mu}(Q,d)]\virt \right) \\ \nonumber
& \left( \frac{c(\EE\dual \boxtimes ev_{\bullet}^* T_{\PP^n})s(\EE\dual \boxtimes ev_{\bullet}^*( \oplus_i \cO_{\PP^n}(\deg f_i) ) ) }{c(\EE\dual \boxtimes q_{\bar{\mu}}^*\wtil{\EE}')} \right)_{\rank V_1' - m - 1} \\ 
& = \frac{ 1 }{\ell!}(b_Q)_* \left( [\wtil{\cM}_{1,(K_0,[\ell])}] \times [\wtil{M}_{0,\bar{\mu}}(Q,d)]\virt \right) \left( \frac{c(\EE\dual  \boxtimes ev_{\bullet}^* T_{Q})}{c(\EE\dual \boxtimes q_{\bar{\mu}}^*\wtil{\EE}')} \right)_{\rank V_1' - m - 1}
\end{align}
where $q_{\bar{\mu}} : \wtil{M}_{0,\bar{\mu}}(Q,d) \to \PP\wtil{\fM}_{0,\bar{\mu}}$ is the projection morphism.
The first equality comes from \eqref{chernclassexpress1}, the second equality comes from \eqref{Gysinpushforward1}, the third equality comes from \cite[Proposition 5.5]{LO18}, \eqref{eq:inducedmorphism2}, and the base change theorem, and the fourth equality comes from the exact sequence of tangent bundles
\[
0 \to T_Q \to T_{\PP^n}|_Q \to \oplus_i \cO_{\PP^n}(\deg f_i) |_Q \to 0.
\]
\eqref{chernclassexpression3} exactly coincides with the formula \cite[(3-29)]{Zin08standard}. Note that an integration over $A^{\bar{\mu}}_{k,d}$ induces the formula \cite[Theorem 1A]{Zin08standard}; see \cite[Section 3.4]{Zin08standard}.
So the second condition (2) of Theorem \ref{main} is explained.

\subsection{Condition (1) of Theorem \ref{main}} \label{Review:reducedcycle}
\cite[Theorem 2.11]{HL10} tells us that 
$$N^{red} := \pi_* ev^* ( \oplus_{i=1}^m \cO_{\PP^n}(\deg f_i) ) $$ 
is a vector bundle on $\wtil{\cM}^{red}$.
Let $s$ be the section on $N^{red}$ induced by $f_i$, which are defining equations of $Q \subset \PP^n$. 
By Definition \ref{Def:CycleDecomp} and \cite[Proposition 4.1]{LO18}, we have
$$
A^{red}_{k,d} = \Gysin{\bdst{E_{\wtil{\cM}^p/\wtil{\fB} }\dual }, \sigma }[\fC_{\wtil{\cM}^{p,red}/\wtil{\fB}}] = \Gysin{(N^{red})\dual, s^\vee}[\wtil{\cM}^{p,red}] .
$$ 
The most right-hand side $\Gysin{(N^{red})\dual,s^\vee}[\wtil{\cM}^{p,\red}]$ is a refined Euler class. Thus the first condition (1) of Theorem \ref{main} is explained.

\newpage

\section*{Notations}
\noindent The below is a table of notations frequently used.
\begin{small}
\begin{table}[ht]
\label{eqtable}
\renewcommand\arraystretch{1.0}
\noindent\[
\begin{array}{l|l}
\hline
Q & \text{a complete intersection in } \PP^n \text{ defined by } \{f_1=\dots=f_m=0\}  \\
\fS_{k,d} \text{ or } \fS & \text{the index set of elements $\mu = ( (d_1,K_1),\dots,(d_{\ell},K_{\ell}) )$} \\ & \text{such that $\sum_i d_i = d$, $K_j$ are mutually disjoint subsets of $[k]$} \\
\bar{\fS} & \text{the index set of elements $\bar{\mu} = \{ (d_1,K_1),\dots,(d_{\ell},K_{\ell}) \}$} \\
\fM_{g,k} \text{ or } \fM & \text{the moduli space of prestable genus $g$ curves with $k$ marked}\\
& \text{points} \\
\fM_{g,k,d}^w \text{ or } \fM^w& \text{the moduli space of prestable genus $g$, weight $d$ curves with} \\
& \text{$k$ marked points}\\
\fM^{\bar{\mu}} & \text{the closed substack of $\fM^w$ parametrizing $\bar{\mu}$-type weighted curves} \\
\fM_{0,\mu} & \prod\limits_{i \in [\ell]} \fM^w_{0, \bullet\sqcup K_i,d_i } \\
\wtil{\fM}^w & \text{Hu-Li's desingularization of $\fM^w_{1,k,d}$}\\
\wtil{\fM}^{\bar{\mu}} & \text{the exceptional divisor in $\wtil{\fM}^w$ lying on a proper transform of $\fM^{\bar{\mu}}$ }\\
\fM_{g,k,d}^{div} \text{ or } \fM^{div} & \text{the moduli space of prestable genus $g$ curves with $k$ marked} \\
& \text{points and a degree $d$ divisor on $C$}\\
\wtil{\fM}^{div} & \wtil{\fM}^w \times_{\fM_{1,k,d}^w} \fM^{div}_{1,k,d} \\
\fB_{g,k,d} \text{ or } \fB & \text{the moduli space of prestable genus $g$ curves with $k$ marked} \\
& \text{points and a degree $d$ line bundle on $C$}\\
\wtil{\fB} & \wtil{\fM}^w \times_{\fM_{1,k,d}^w} \fB_{1,k,d} \\
\bar{M}_{\bar{\mu}}(\PP^n,d) & \bar{M}_{1,k}(\PP^n,d) \times_{\fM^w} \fM^{\bar{\mu}} \\
\bar{M}_{\bar{\mu}}(Q,d) & \bar{M}_{1,k}(Q,d) \times_{\fM^w} \fM^{\bar{\mu}} \\
\bar{M}_{0,\mu}(\PP^n,d) & \bar{M}_{0,\bullet \sqcup K_1(\mu)}(\PP^n,d_1(\mu)) \times_{\PP^n} \times \dots \times_{\PP^n} \bar{M}_{0,\bullet \sqcup K_{\ell(\mu)}(\mu)}(\PP^n,d_{\ell(\mu)}(\mu)) \\
\bar{M}_{0,\mu}(Q,d) & \bar{M}_{0,\bullet \sqcup K_1(\mu)}(Q,d_1(\mu)) \times_Q \times \dots \times_Q \bar{M}_{0,\bullet \sqcup K_{\ell(\mu)}(\mu)}(Q,d_{\ell(\mu)}(\mu)) \\
\wtil{\cM} & \wtil{\fM}^w \times_{\fM^w_{1,k,d}} \bar{M}_{1,k}(\PP^n,d)\\
\wtil{\cM}^{red} & \text{the reduced component of $\wtil{\cM}$}\\
\wtil{\cM}^{\bar{\mu}} & \text{an irreducible component of $\wtil{\cM}$ indexed by $\bar{\mu}$} \\
\wtil{\cM}_Q & \wtil{\fM}^w \times_{\fM^w_{1,k,d}} \bar{M}_{1,k}(Q,d)\\ 
\wtil{\cM}^p & \wtil{\cM} \times_{\bar{M}_{1,k}(\PP^n,d)} \bar{M}_{1,k}(\PP^n,d)^p  \\
\wtil{\cM}^{p,red}  &\text{the reduced component of $\wtil{\cM}^p$} \\
\wtil{\cM}^{p,\bar{\mu}}  &\wtil{\cM}^{\bar{\mu}} \times_{\bar{M}_{1,k}(\PP^n,d)} \bar{M}_{1,k}(\PP^n,d)^p \\
\what{\cM}^{\bar{\mu}} & \text{the blow-up of $\wtil{\cM}^{\bar{\mu}}$ along $\wtil{\cM}^{\bar{\mu}} \cap \wtil{\cM}^{red}$} \\
\what{\cM}^{p,\bar{\mu}} & \wtil{\cM}^{p,\bar{\mu}} \times_{\wtil{\cM}^{\bar{\mu}}} \what{\cM}^{\bar{\mu}} \\
\iota_{\mu} & \text{the node-identifying morphism $\bar{\cM}_{1,K_0\sqcup [\ell]} \times \fM_{0,\mu} \to \fM^{\bar{\mu}}$.} \\
\wtil{\iota}_{\mu} & \text{the proper transform of $\iota_{\mu}$ along the blow-up $\wtil{\fM}^{\bar{\mu}} \to \fM^{\bar{\mu}}$.} \\
\pi: \cC \to X & \text{a universal curve on a stack $X$} \\
ev & \text{an evaluation morphism from $\cC$} \\
\fC_{A/B} & \text{the relative intrinsic normal cone of $A$ relative to $B$.} \\
C_{A/B} & \text{the coarse moduli space of the intrinsic normal cone $\fC_{A/B}$} \\
\bdst{E_0 \to E_1} & \text{a bundle stack $[E_1/E_0]$}\\
\Gysin{\bdst{E},\sigma} \text{ or } \Gysin{E,\sigma} & \text{a localized Gysin map}\\
\hline
\end{array}
\]
\end{table}
\end{small}

\end{document}